\documentclass{amsart}
\usepackage{amsmath}
\usepackage{amsthm}
\usepackage{amssymb}
\usepackage{tikz}
\usetikzlibrary{automata,positioning}
\usepackage{enumitem}
\usepackage{mathrsfs}
\usepackage{scalerel}

\usepackage{hyperref}

\newtheorem{theorem}{Theorem}[section]

\newtheorem{alphatheorem}{Theorem}

\theoremstyle{definition}

\newtheorem{definition}[theorem]{Definition}
\newtheorem{proposition}[theorem]{Proposition}

\newtheorem*{proposition*}{Proposition}
\newtheorem*{observation*}{Observation}
\newtheorem*{claim*}{Claim}

\newtheorem*{lemma*}{Lemma}

\newtheorem{corollary}[theorem]{Corollary}
\newtheorem{remark}[theorem]{Remark}

\newtheorem*{conjecture*}{Conjecture}

\newtheorem*{convention*}{Convention}

\theoremstyle{plain}
\newtheorem{lemma}[theorem]{Lemma}

\newcommand{\bra}[1]{ \left( #1 \right) }

\newcommand{\abs}[1]{\left|#1\right|}

\newcommand{\norm}[1]{\left\lVert #1 \right\rVert}

\newcommand{\bbra}[1]{ { \left\{ #1 \right\} } }

\newcommand{\cP}{\mathcal{P}}

\newcommand{\cN}{\mathcal{N}}
\newcommand{\cW}{\mathcal{W}}

\newcommand{\e}{\varepsilon}

\usepackage{mathrsfs}
\usepackage[mathscr]{euscript}

\newcommand{\NN}{\mathbb{N}}
\newcommand{\QQ}{\mathbb{Q}}

\newcommand{\PP}{\mathbb{P}}

\DeclareMathOperator*{\EE}{\scalerel*{\mathbb{E}}{\big(}}

\newcommand{\ZZ}{\mathbb{Z}}
\newcommand{\RR}{\mathbb{R}}
\newcommand{\R}{\RR}

\newcommand{\CC}{\mathbb{C}}

\newcommand{\floor}[1]{\left\lfloor #1 \right\rfloor}

\newcommand{\set}[2]{\left\{ #1 \ \middle| \ #2 \right\} }

\newcommand{\barbb}[1]{\bar{ \bb #1}}
\newcommand{\ffrac}[2]{#1/#2}

\newcommand{\parbreak}[1]{
\begin{center}
***
\end{center}
}

\newcommand{\bb}{\mathbf}

\newcommand{\comment}[1]{}

\usepackage{soul}

\usepackage{stmaryrd}

\subjclass[2010]{11B85, 11B30}

\begin{document}

\author[ J. Konieczny]{   Jakub Konieczny
}
\address{Mathematical Institute \\ 
University of Oxford\\
Andrew Wiles Building \\
Radcliffe Observatory Quarter\\
Woodstock Road\\
Oxford\\
OX2 6GG}
\email{jakub.konieczny@gmail.com}

\title{Gowers norms for the Thue-Morse and Rudin-Shapiro sequences}

\maketitle

\begin{abstract}
	We estimate Gowers uniformity norms for some classical automatic sequences, such as the Thue-Morse and Rudin-Shapiro sequences. The methods are quite robust and can be extended to a broader class of sequences.

As an application, we asymptotically count arithmetic progressions of a given length in the set of integers $\leq N$ where the Thue-Morse (resp.\ Rudin-Shapiro) sequence takes the value $+1$.
\end{abstract}

\newcommand{\Qs}{\{0,1\}^s}
\newcommand{\cL}{\mathcal{L}}
\section{Introduction}

The Thue-Morse sequence is among the simplest automatic sequences. It can be described by the recursive relations: 
$$
	t(0) = 1, \qquad t(2n) = t(n), \qquad t(2n+1) = -t(n),
$$
or by the explicit formula $t(n) = (-1)^{s_2(n)}$, where $s_2(n)$ denotes the sum of digits of $n$ base $2$. Arguably, the Thue-Morse sequence is very structured --- in particular, its subword complexity (i.e.\ number of distinct subsequences of a given lenght) has linear rate of growth  (this is a general feature of automatic sequences). On the other hand, there are also ways in which it can be construed as pseudorandom. 

Mauduit and Sark\"{o}zy \cite{Mauduit-1998} studied several measures of pseudorandomness for the Thue-Morse sequence, and showed that $t(n)$ is highly uniform according to some but not all of those measures. In particular, it is shown that for any positive integers $a,b,M,N$ with $a (M-1) + b < N$, we have
\begin{equation}
	\label{eq:cor-ap-TM}
	{ \sum_{n=0}^{M-1}  t(a n + b) } = O(N^{\log 3/ \log 4}),
\end{equation}
where the implied constant is absolute. In fact, this easily follows from the bound obtained by Gelfond \cite{Gelfond-1967}:
\begin{equation}
	\label{eq:cor-lin-TM}
	\sup_{\alpha \in \RR} \abs{ \sum_{n=0}^{N-1} t(n) e(\alpha n)} = O(N^{\log 3/ \log 4}),
\end{equation}
where as usual $e(x) := e^{2\pi i x}$. However, for all sufficiently large integers $N$, there exist positive integers $M$ and $h$ with $M+ h \leq N $ such that
\begin{equation}
	\label{eq:cor-self-TM}
	\abs{ \sum_{n=0}^{M-1} t(n)t(n+h) } \geq c N,
\end{equation}
where $c > 0$ is an absolute constant. (We may take $c = \ffrac{1}{12}$ and $h = 1$.)
Thus, the Thue-Morse sequence does not correlate with arithmetic progressions but can have some large self-correlations.

In a different direction, $t(n)$ is believed to look highly random when restricted to certain subsequences. Several conjectures to this effect, in a slightly more general situation, are known as the Gelfond Problems \cite{Gelfond-1967}. 

Let $\cP$ denote the set of prime numbers and $\pi(N) = \abs{ \cP \cap [N]}$. Gelfond conjectured that it should hold that
\begin{equation}
	\label{eq:Gelfond1-TM}
	\abs{ \set{ p \in [N] \cap \cP }{ t(p) = +1 } } = \frac{1}{2}\pi(N) + O(N^{1-c}),
\end{equation}
where $c > 0$ is an absolute constant. This was proved only recently by Mauduit and Rivat \cite{MauduitRivat-2010}.

Let $p(x) \in \QQ[x]$ be a polynomial with $p(\ZZ) \subset \ZZ$, and extend $t(n)$ to $\ZZ$ by putting $t(-n) = t(n)$. Another of Gelfond's conjectures asserts that we should have
\begin{equation}
	\label{eq:Gelfond2-TM}
	\abs{ \set{ n \in [N] }{ t(p(n)) = +1 } } = \frac{1}{2}N + O(N^{1-c}),
\end{equation}
for an absolute constant $c > 0$. This is only known for polynomials of degree $2$ by work of Mauduit and Rivat \cite{MauduitRivat-2009}. In fact, for $p(n) = n^2$, a much stronger result is shown in \cite{DrmotaMauduitRivat-TM-squares}, implying in particular that $t(n^2)$ is normal (i.e. that each block of $\pm 1$'s of length $l$ appears in $t(n^2)$ with frequency $\ffrac{1}{2^l}$). Namely, for each $l$ and each $\epsilon_0,\dots, \epsilon_{l-1} \in \{-1,+1\}$ we have
\begin{equation}
	\label{eq:Gelfond22-TM}
	\abs{ \set{ n \in [N] }{ t((n+i)^2) = \epsilon_i \text{ for } 0 \leq i < l } } = \frac{1}{2^l}N + O(N^{1-c}),
\end{equation}
where the constant $c$ depends only on $l$.

Finally, let $\alpha \in \RR_{>0} \setminus \ZZ$. It is conjectured (see e.g.\ \cite{Drmota-2014}) that the sequence $t(\floor{ n^\alpha})$ should be normal for any such $\alpha$. This is confirmed in the quantitative sense by M\"{u}llner and Spiegelhofer \cite{MullnerSpiegelhofer}, who showed that for any $1 < \alpha < 3/2$, for any $l$ and $\e_i \in \{-1,+1\}$ for $0 \leq i < l$ it holds that 
\begin{equation}
	\label{eq:GelfondX-TM}
	\abs{ \set{ n \in [N] }{ t(\floor{(n+i)^\alpha}) = \epsilon_i \text{ for } 0 \leq i < l } } = \frac{1}{2^l}N + O(N^{1-c}),
\end{equation}
where the constant $c$ depends on $l$ and $\alpha$.

In general, it is believed that $t(n)$ restricted to any of the aforementioned sequences should be a normal sequence (we refer e.g.\ to \cite{Drmota-2014}, which is also an excellent reference for related results).

Here, we consider a different notion of pseudorandomness related to Gowers uniformity norms. First introduced by Gowers in his work on an new proof of Szemeredi's theorem \cite{Gowers-2001}, these norms now play a crucial role in additive combinatorics. For exposition of the relevant theory, we refer to \cite{Green-book} and \cite{Tao-book}.

\begin{definition}
	Fix $s \in \NN$. For $N \in \NN$ and $f \colon [N] \to \RR$, the $s$-th Gowers uniformity norm of $f$ is defined by
	\begin{equation}
		\norm{f}_{U^s[N]}^{2^s} := \EE_{\substack{ n, \bb h }} \prod_{\omega \in \Qs} f (n + \omega \cdot \bb h)
	\label{eq:def-Gowers}
	\end{equation}	  
	where $\omega \cdot \bb h = \sum_{i = 1}^s \omega_i h_i$, and the expectation is taken over all $n \in \ZZ,\ \bb h \in \ZZ^s$ for which the cube $\set{ n + \omega \cdot \bb h }{ \omega \in \Qs } $ is contained in $[N]$.  
\end{definition}

A sequence is (informally) said to be Gowers uniform of order $s$ if its $U^s[N]$-norm is small. We show that $t(n)$ indeed is highly Gowers uniform, which makes it one of the simplest sequences known to be Gowers uniform of all orders.

\begin{alphatheorem}\label{thm:TM}
	Let $t\colon \NN_0 \to \{\pm 1\}$ denote the Thue-Morse sequence. For any $s \in \NN$, there exists $c = c(s) > 0$ such that $\norm{t}_{U^s[N]} = O(N^{-c})$ as $N \to \infty$.
\end{alphatheorem}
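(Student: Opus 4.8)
The plan is to leverage the self-similarity of $t$ --- the relations $t(2m)=t(m)$, $t(2m+1)=-t(m)$, and more generally the carry-free factorisation $t(2^K a+b)=t(a)\,t(b)$ valid for $0\le b<2^K$ --- together with Gelfond's bound \eqref{eq:cor-lin-TM} as the sole external input, organised as an induction on $s$ with an inner induction on the scale $N$. After the standard reductions it suffices to treat $N=2^{KR}$ for a fixed $K=K(s)$ and all $R\in\NN$, the general case following by comparing $\norm t_{U^s[N]}$ with $\norm t_{U^s[N']}$ for a nearby $N'$ of this form; and one has $\norm t_{U^s[N]}^{2^s}\asymp_s N^{-(s+1)}\Sigma_s(N)$, where $\Sigma_s(N)=\sum\prod_{\omega\in\Qs}t(n+\omega\cdot\bb h)$, the sum ranging over $(n,\bb h)\in\ZZ\times\ZZ^s$ whose cube $\set{n+\omega\cdot\bb h}{\omega\in\Qs}$ lies in $[0,N)$. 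It is convenient to prove more: for a \emph{shape} $\psi\colon\Qs\to\ZZ$ with $\norm\psi_\infty\le s$, set $\Sigma_{s,\psi}(M)=\sum\prod_\omega t(n+\omega\cdot\bb h+\psi(\omega))$ over $(n,\bb h)$ with $\set{n+\omega\cdot\bb h+\psi(\omega)}{\omega\in\Qs}\subseteq[0,M)$ (and, for the induction, also along arithmetic subprogressions); then $\Sigma_{s,\psi}=\Sigma_s$ when $\psi\equiv0$ and, crucially, $\Sigma_{s,\psi}(M)=\Sigma_s(M)$ \emph{whenever $\psi$ is an affine function of $\omega$} (translate $n$ and $\bb h$). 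The target is $\abs{\Sigma_{s,\psi}(N)}\ll_s N^{s+1-c(s)}$ for some $c(s)>0$.

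The base cases $s=1,2$ are immediate from Gelfond: $s=1$ is \eqref{eq:cor-ap-TM} with $M=N$, while for $s=2$ one embeds $t\bb 1_{[N]}$ into $\ZZ/\tilde N\ZZ$ with $\tilde N\asymp N$ and combines the Fourier identity $\norm g_{U^2}^4=\sum_\xi\abs{\hat g(\xi)}^4\le\norm{\hat g}_\infty^2\norm g_2^2$ with $\norm{\hat g}_\infty\ll N^{(\log 3)/(\log 4)-1}$ from \eqref{eq:cor-lin-TM}; both arguments survive localisation to subprogressions and translation by a shape. For $s\ge 3$, assume the assertion for $s-1$. Splitting off the low $K$ binary digits (write $n=2^K\bar n+\tilde n$, $\bb h=2^K\bar{\bb h}+\tilde{\bb h}$ with $0\le\tilde n,\tilde h_i<2^K$), one has $n+\omega\cdot\bb h=2^K\bigl(\bar n+\omega\cdot\bar{\bb h}+c(\omega)\bigr)+r(\omega)$ with a \emph{carry pattern} $c=c_{\tilde n,\tilde{\bb h}}(\omega):=\floor{(\tilde n+\omega\cdot\tilde{\bb h})/2^K}\in\{0,\dots,s\}$ and $0\le r(\omega)<2^K$, and the carry-free factorisation gives the exact identity
\[
\Sigma_s(N)=\sum_{\tilde n,\tilde{\bb h}\in[0,2^K)^{s+1}}\Bigl(\prod_\omega t(r(\omega))\Bigr)\Sigma_{s,c_{\tilde n,\tilde{\bb h}}}(N/2^K),
\]
and likewise with a shape and a subprogression carried along. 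The point of the dichotomy is that when $c_{\tilde n,\tilde{\bb h}}$ is affine in $\omega$ the inner sum is just $\Sigma_s(N/2^K)$ (the quantity under study, one block coarser), whereas when $c$ is \emph{not} affine the configuration $\set{n+\omega\cdot\bb h+c(\omega)}{\omega}$ is ``bent'' and a genuinely smaller estimate should be available.

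Two further inputs close the scheme. The first is combinatorial: for $K=K(s)$ suitably large, $\#\set{(\tilde n,\tilde{\bb h})\in[0,2^K)^{s+1}}{c_{\tilde n,\tilde{\bb h}}\text{ is affine in }\omega}\le(1-\kappa)2^{K(s+1)}$ for some $\kappa=\kappa(s)>0$, since affineness forces, for every pair $i\ne j$, the second difference $\floor{(\rho_i+\rho_j-\tilde n)/2^K}$ to vanish, where $\rho_i=(\tilde n+\tilde h_i)\bmod 2^K$, and this fails with probability bounded below (uniformly in $K$). The second --- the heart of the matter --- is the bent-configuration estimate: \emph{assuming the case $s-1$}, for every non-affine shape $\psi$ one has $\abs{\Sigma_{s,\psi}(M)}\ll M^{s+1-c_1}$ with $c_1=c_1(s)>0$. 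Here one performs the Gowers--Cauchy--Schwarz (``box'') expansion of $\Sigma_{s,\psi}$ in the variables $(n,\bb h)$; because $\psi$ has a nonzero second difference in some pair of directions, differencing there isolates a factor of strictly lower complexity, so that after fixing suitable variables one is left with either a cornered $U^{s-1}$-sum of $t$ along a subprogression --- controlled by the inductive hypothesis --- or a one-dimensional exponential sum --- controlled by \eqref{eq:cor-lin-TM} --- and in either case a power saving results.

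Granting these, bounding $\abs{\prod_\omega t(r(\omega))}=1$ in the descent identity yields
\[
\abs{\Sigma_s(N)}\le(1-\kappa)2^{K(s+1)}\abs{\Sigma_s(N/2^K)}+2^{K(s+1)}\,O_s\!\bra{(N/2^K)^{s+1-c_1}}.
\]
Writing $\sigma(N):=\Sigma_s(N)/N^{s+1}$ (so $\abs{\sigma}=O_s(1)$), this reads $\abs{\sigma(N)}\le(1-\kappa)\abs{\sigma(N/2^K)}+O_s(N^{-c_1})$; iterating over the $R\asymp\log N$ scales down to $\sigma(1)=1$ gives $\abs{\sigma(N)}\ll_s(1-\kappa)^R+N^{-c_1}\ll_s N^{-c}$ for a suitable $c=c(s)>0$ (using $(1-\kappa)^R=N^{-\log(1/(1-\kappa))/(K\log 2)}$), and the same argument runs with a shape and a subprogression present, closing the induction; hence $\norm t_{U^s[N]}^{2^s}\ll_s N^{-c}$, which is the assertion. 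I expect the bent-configuration estimate to be the main obstacle: extracting a genuine power saving for $\Sigma_{s,\psi}$ with non-affine $\psi$, uniformly in $\psi$ and in the ambient subprogression, is where the real content lies, the remainder being bookkeeping around the substitutive structure of $t$ and the two base cases supplied by Gelfond's bound.
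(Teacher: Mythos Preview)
Your route differs genuinely from the paper's. The paper does not induct on $s$ and does not use Gelfond's bound: at fixed $s$ it tracks the shifted averages $A(L,\bb r)$ for \emph{all} bounded $\bb r$ simultaneously, shows they obey a closed linear recursion in $L$ (Lemma~\ref{lem:recurrence-A-TM}) governed by a finite Markov chain on signed states $(\bb r,\pm1)$, verifies this chain is irreducible, aperiodic and invariant under the sign flip $(\bb r,\sigma)\mapsto(\bb r,-\sigma)$ (Proposition~\ref{lem:G-TM-connected}), and concludes exponential decay via Perron--Frobenius (Corollary~\ref{cor:main-TM}). Your descent identity is exactly one step of that recursion, and your ``shapes'' $\psi$ are the paper's $\bb r$.

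The gap is precisely where you locate it, and the sketch you give for the bent-configuration estimate does not go through. The system of linear forms $\{n+\omega\cdot\bb h:\omega\in\Qs\}$ is unchanged by $\psi$; non-affineness only shifts the corner functions to $f_\omega=t(\cdot+\psi(\omega))$. Gowers--Cauchy--Schwarz then bounds $\abs{\Sigma_{s,\psi}(M)}$ by $M^{s+1}\min_\omega\norm{f_\omega}_{U^s[M]}\asymp M^{s+1}\norm{t}_{U^s[M]}$, which is circular. More concretely, any Cauchy--Schwarz or van der Corput step in the parallelepiped variables absorbs the constant offsets $\psi(\omega)$ by a change of summation variable, so the resulting inequality cannot distinguish affine from non-affine $\psi$: the non-affineness is information about \emph{constants}, and box-norm inequalities are blind to those. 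What actually appears after ``differencing in a direction $i$'' and fixing the remaining variables is a one-variable sum of a product of $2^{s-1}$ shifted copies of $t$ --- not a $U^{s-1}$-average --- and by \eqref{eq:cor-self-TM} such self-correlations need not be small; nor does any linear phase arise that \eqref{eq:cor-lin-TM} could address, since products of real values of $t$ produce no characters.

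The natural repair --- iterate the descent across all bounded shapes $\psi$ at level $s$ simultaneously and analyse the resulting finite signed transition matrix --- is exactly the paper's argument. So your framework is sound up to, but not beyond, the bent-configuration step.
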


A key reason for interest in the Gowers uniformity norms is their usefulness in counting linear patterns. In particular, as an immediate corrolary of Theorem \ref{thm:TM}  we conclude via the Generalised von Neumann Theorem that the number of $k$-term arithmetic progressions in $\set{n \in [N]}{t(n) = +1}$ is $N^2/2k + O(N^{2-c})$. 

We also remark that sequences with small Gowers norms do not correlate with polynomials phases: if $p \in \RR[x]$ with $\deg p = s-1$ then $ \EE_{n < N} f(n)e(p(n)) \ll \norm{f}_{U^s[N]}.$ 
Hence, Theorem \ref{thm:TM} implies that $t(n)$ is a fully oscillating sequence in the terminology of \cite{Fan-A}. As an application, for any dynamical system with quasi-discrete spectrum $(X,T)$ and any $f_1,\dots,f_l \in C(X)$, $q_1,\dots,q_l \in \QQ[x]$ with $q_i(\NN_0) \subset \NN_0$ for $1 \leq i \leq l$, we have  
\begin{equation}
\label{eq:cor-of-Fan}
	\lim_{N \to \infty} \EE_{n < N} t(n) \prod_{i=1}^l f_i(T^{q_i(n)} x) = 0
\end{equation}
for any point $x \in X$.

A subtly different type of uniformity norms $\norm{ \cdot }_{U(s)}$ on $l^\infty(\ZZ)$ is introduced and studied in \cite{HostKra-2009}. In fact, it follows from results obtained there that $\norm{t}_{U(s)} = 0$ for all $s \in \NN$ (cf.\ Proposition 2.21), and Theorem \ref{thm:TM} can be construed as an analogue of this result. As an example application (cf.\ remarks after Theorem 2.25), this implies that for any measure preserving system $(X,T,\mu)$, any $f_1,\dots,f_l \in L^\infty(X)$ we have
\begin{equation}
	\EE_{n < N} t(n) \prod_{i=1}^l f_i(T^{in} x) \to 0 \quad \text{ in $L^2(\mu)$ as $N \to \infty$.}
\end{equation}

\mbox{} 

A slightly more complicated sequence we deal with carries the name of Rudi-Shapiro. It is recursively given by
$$
	r(0) = 0,\ r(2n) = r(n),\ r(4n+1) = r(n),\ r(4n+3) = -r(2n+1),
$$
or explicitly by $r(n) = (-1)^{f_{\mathtt{11}}(n) }$, where $f_{\mathtt{11}}(n)$ denotes the number of times the pattern $\mathtt{11}$ appears in the binary expansion of $n$.

Much like in the case of the Thue-Morse sequence, various pseudorandomness properties of the Rudin-Shapiro sequence have long been studied. In \cite{Mauduit-1998} it is shown that $r(n)$ does not correlate with arithmetic progressions, but has large self-correlations. More precisely, we have 
\begin{equation}
	\label{eq:cor-ap-RS}
	{ \sum_{n=0}^{M-1}  t(a n + b) } = O(N^{1/2}),
\end{equation}
for any $a,b,M$ with $a(M-1) +b<N$, while there exist $M,h$ with $M+h \leq N$ such that
\begin{equation}
	\label{eq:cor-self-RS}
	\abs{ \sum_{n=0}^{M-1} t(n)t(n+h) } \geq c N,
\end{equation}
where one can take $c = 1/6$ if $N$ is sufficiently large.

There are also exist results asserting pseudorandomness of the restrictions of $r(n)$ to certain subsequences. In the case of the primes, in analogy with \eqref{eq:Gelfond1-TM}, we have
\begin{equation}
	\label{eq:Gelfond1-RS}
	\abs{ \set{ p \in [N] \cap \cP }{ t(p) = +1 } } = \frac{1}{2}\pi(N) + O(N^{1-c}),
\end{equation}
where $c > 0$ is an absolute constant \cite{MauduitRivat-2015}. Likewise, in analogy to \eqref{eq:GelfondX-TM}, it follows as a special case from results in \cite{DeshouillersDrmotaMorgenbesser-2012} that
\begin{equation}
	\label{eq:GelfondX-RS}
	\abs{ \set{ n \in [N] }{ r(\floor{ n^\alpha}) = +1 } } = \frac{1}{2}N + o(N).
\end{equation}
for any $1 < \alpha < 7/5$. (It is not known if $r(\floor{ n^\alpha})$ is normal.)

We show that $r(n)$ is highly Gowers uniform. As an application we conclude that the number of $k$-term arithmetic progressions in $\set{n \in [N]}{r(n) = +1}$ is $N^2/2k + O(N^{2-c})$.

\begin{alphatheorem}\label{thm:RS}
	Let $r\colon \NN_0 \to \{\pm 1\}$ denote the Rudin-Shapiro sequence. For any $s \in \NN$, there exists $c = c(s) > 0$ such that $\norm{r}_{U^s[N]} = O(N^{-c})$ as $N \to \infty$.
\end{alphatheorem}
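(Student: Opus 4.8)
\emph{Proposed proof.}
The plan is to follow the strategy used for Theorem~\ref{thm:TM}; I indicate only the points that require changes. Write $n=\sum_{j\geq0}\e_j(n)2^j$ for the binary expansion. The argument for the Thue--Morse sequence rests on the exact self-similarity $t(u+2^\rho v)=t(u)t(v)$ for $0\leq u<2^\rho$. For the Rudin--Shapiro sequence the corresponding identity carries a boundary correction,
\begin{equation}\label{eq:RS-split}
 r(u+2^\rho v)=r(u)\,r(v)\,(-1)^{\e_{\rho-1}(u)\,\e_0(v)},\qquad 0\leq u<2^\rho,
\end{equation}
because a single occurrence of the pattern $\mathtt{11}$ may straddle the cut between the low block $u$ and the high block $v$. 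The correction is the product of one binary digit of $u$ with one binary digit of $v$, hence a bounded, ``degree-one'' quantity; every use of clean multiplicativity in the proof of Theorem~\ref{thm:TM} now produces finitely many such factors that must be carried along.

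I would first reduce, as for $t$, to $N=2^L$ and argue by induction on $s$. The cases $s=1,2$ are classical: the Rudin--Shapiro $L^\infty$ bound $\sup_{\alpha\in\RR}\abs{\sum_{n<N}r(n)e(\alpha n)}=O(N^{1/2})$ yields $\norm{r}_{U^2[N]}=O(N^{-c})$, and $\norm{r}_{U^1[N]}\leq\norm{r}_{U^2[N]}$. For the step I would use the van der Corput identity $\norm{r}_{U^{s+1}[N]}^{2^{s+1}}\ll\EE_{\abs h<N}\norm{\Delta_h r}_{U^s[N]}^{2^s}$ with $\Delta_h r(n)=r(n)r(n+h)$, and split $h$ into a good and a bad part. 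Since $\EE_h\abs{\EE_n\Delta_h r(n)}^2$ is, up to normalisation, $\norm{r}_{U^2[N]}^4=O(N^{-c})$, the $h$ for which $\Delta_h r$ has a large low-frequency component form a set of density $O(N^{-c})$, on which the trivial bound $\norm{\Delta_h r}_{U^s[N]}\leq1$ suffices; for the remaining $h$ one notes that, at any scale $\rho\gg\log\abs h$, $\Delta_h r$ obeys a variant of \eqref{eq:RS-split} off a set of $n$ of density $O(\abs h2^{-\rho})$ coming from carries, so it is again of the same ``almost digital'' type and the argument applies to it.

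The power saving $\norm{r}_{U^s[2^L]}=O(2^{-cL})$ is then obtained by a scale recursion, as for $t$: cutting the binary expansions of $n$ and of the shifts $\bb h$ at a scale $\rho$ and applying \eqref{eq:RS-split} rewrites the Gowers cube average over $[2^L]$ in terms of cube averages over $[2^\rho]$ and over $[2^{L-\rho}]$, up to (i) carry errors, controlled by restricting the size of $\bb h$ and discarding an $O_s(\abs{\bb h}2^{-\rho})$ fraction of parameters, and (ii) the boundary characters of \eqref{eq:RS-split}, which couple adjacent blocks only through a single digit and are eliminated by orthogonality exactly as the low-frequency terms above. Iterating over $\floor{L/\rho}$ blocks and inserting the $s=2$ estimate produces a genuine exponential gain, giving $c=c(s)>0$.

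The main obstacle, as for Theorem~\ref{thm:TM}, is the carry bookkeeping: one must check that cutting the binary expansion at scale $\rho$ genuinely decouples the cube $\set{n+\omega\cdot\bb h}{\omega\in\Qs}$ into per-block contributions for a $1-O_s(\abs{\bb h}2^{-\rho})$ proportion of parameters, uniformly over the large shifts living at higher scales. Compared with Thue--Morse the only additional work is that the automaton computing $r$ reads two consecutive digits, so each block must retain one extra bit of memory, and that the boundary characters in \eqref{eq:RS-split} must be tracked; neither alters the shape of the estimate. Alternatively one can check directly that $r$ lies in the broader class of sequences alluded to in the abstract and deduce Theorem~\ref{thm:RS} from the corresponding general statement.
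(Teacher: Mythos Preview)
Your proposal has a genuine gap at the inductive step. You write the van der Corput inequality $\norm{r}_{U^{s+1}[N]}^{2^{s+1}}\ll\EE_{h}\norm{\Delta_h r}_{U^s[N]}^{2^s}$ and propose to close it using the inductive hypothesis, but the inductive hypothesis concerns $\norm{r}_{U^s[N]}$, not $\norm{\Delta_h r}_{U^s[N]}$, and $\Delta_h r$ is a different sequence for every $h$. Your good/bad splitting via $\EE_h\abs{\EE_n\Delta_h r(n)}^2\ll\norm{r}_{U^2[N]}^4$ only controls the \emph{mean} of $\Delta_h r$; it says nothing about $\norm{\Delta_h r}_{U^s[N]}$ for $s\geq 2$ on the ``good'' set. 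The sentence ``it is again of the same `almost digital' type and the argument applies to it'' is circular: the argument you are trying to complete is precisely the $U^s$-bound for such sequences. To make an induction on $s$ work one would need the hypothesis to be uniform over an explicit finite class of sequences that is (approximately) closed under $\Delta_h$, and you have not identified such a class. Your third paragraph (the scale recursion) is closer to a workable idea, but the key claim that the boundary characters $(-1)^{\e_{\rho-1}(u)\e_0(v)}$ ``are eliminated by orthogonality exactly as the low-frequency terms above'' is not justified: in the cube product you obtain $2^s$ such factors coupling adjacent blocks, and there is no evident orthogonality that kills them without further structure.

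The paper proceeds quite differently and avoids induction on $s$ altogether. For fixed $s$ it enlarges the problem to the family of averages $A(L,\bb a,\bb r)=\EE_{n,\bb h}\prod_{\omega} a_\omega(n+\omega\cdot\bb h+r_\omega)$ where each $a_\omega$ ranges over $\cN_2^+(r)=\{n\mapsto r(n),\ n\mapsto r(2n+1)\}$ and $\bb r\in\ZZ^{\Qs}$; this is exactly the finite bookkeeping device that makes precise your remark that ``each block must retain one extra bit of memory''. A one-step recursion (Lemma~\ref{lem:recurrence-A-RS}) expresses $A(L,\cdot)$ as an average of finitely many $A(L-1,\cdot)$, which is interpreted as a random walk on a finite signed graph. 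The substantive step (Proposition~\ref{lem:G-RS-connected}) is that the component reachable from the origin is finite, aperiodic, strongly connected, and invariant under the sign flip $(\bb a,\bb r,\sigma)\mapsto(\bb a,\bb r,-\sigma)$; Perron--Frobenius then gives $A(L,v_0)=O(2^{-cL})$ directly, and a decomposition of $[N]$ into intervals of the form $[m2^{L+1},m2^{L+1}+2^L)$ (on which $r$ is exactly multiplicative, with no boundary correction) transfers this to general $N$. Compared with your sketch, this approach trades the analytic difficulty of tracking carries and boundary characters through an induction for a single combinatorial verification on a finite graph.
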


While we focus our attention on these two specific sequences, many of the observations apply to more general automatic sequences. A sequence $a \colon \NN_0 \to \CC$ is \emph{$k$-automatic} if $a(n)$ can be computed by a finite automaton taking $k$-ary expansion of $n$ on input. For comprehensive background, we refer to \cite{AS}.

\subsection*{Notation} We write $\NN = \{1,2,\dots\}$ and $\NN_0 = \NN \cup \{0\}$. We use standard asymptotic notation. For any expressions $X$, $Y$, we write $X = O(Y)$ or $X \ll Y$ if there exists a constant $c > 0$ such that $X < cY$. We consistently use boldface letter $\bb x$ to denote vector with coordinates $(x_i)$; and also write $\abs{\bb x} := \sum_i \abs{x_i}$. By $[N]$ we denote the interval $\{0,1,\dots,N-1\}$.

\subsection*{Acknowledgements} The author is grateful to Tanja Eisner for her hospitality during his stay in Leipzig when the work on this project began; to Jakub Byszewski for many long and productive discussions; to Christian Mauduit, Clemens M\"{u}llner, and Aihua Fan for helpful comments; and to Ben Green for his encouragement and valuable advice. 

\section{Thue-Morse sequence}\label{sec:TM}

The purpose of this section is to prove Theorem \ref{thm:TM}, asserting that the uniformity norms of the Thue-Morse sequence $t(n) = (-1)^{s_2(n)}$ are small. Throughout this section, let $s \in \NN_{\geq 2}$ be fixed. (We may assume that $s \geq 2$ since $\norm{t}_{U^2[N]} \gg \norm{t}_{U^1[N]}$.) It will be convenient to  study somewhat more general averages 
\begin{equation}
	\label{eq:def-A-TM}
	A(L, \bb r) := \EE_{n, \bb h }\prod_{\omega \in \Qs} t( n + \omega \cdot \bb h + r_\omega),
\end{equation}
where $n, \bb h$ parametrize the cubes $\set{ n + \omega \cdot \bb h}{\omega \in \Qs} \subset [2^L]$, $L \in \NN_0$ and $\bb r = (r_\omega)_{\omega \in \Qs}$ with $r_\omega \in \ZZ$. Note that if $\bb r = \bb 0$, then \eqref{eq:def-A-TM} defines $\norm{r}_{U^s[2^L]}^{2^s}$.

\begin{lemma}\label{lem:recurrence-A-TM}
	The averages $A(L, \bb r)$ satisfy the recursive relation
	\begin{equation}
		\label{eq:recurrence-A-TM}
		A(L, \bb r) = (-1)^{\abs{ \bb r}} \EE_{ \bb e } A(L-1, \delta(\bb r; \bb e)) + O(2^{-L}),
	\end{equation}
	where the average is taken over $\bb e = (e_i)_{i=0}^s \in \{0,1\}^{s+1}$ and $\delta(\bb r; \bb e)$ is given by 
\begin{equation}
\label{eq:005}
	\delta(\bb r; \bb e)_\omega = \floor{ \frac{ r_\omega + (1,\omega) \cdot \bb e  }{2} } = \floor{ \frac{ r_\omega + e_0 + \sum_{i=1}^s \omega_i e_i }{2} }.
	\end{equation}	

\end{lemma}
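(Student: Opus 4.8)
The plan is to expand each factor $t(n+\omega\cdot\bb h + r_\omega)$ according to the parity of its argument, using the defining relations $t(2m)=t(m)$ and $t(2m+1)=-t(m)$. Write $n = 2n' + e_0$ with $e_0 \in \{0,1\}$, and $h_i = 2h_i' + e_i$ with $e_i \in \{0,1\}$ for $1 \le i \le s$; collect these digits into $\bb e = (e_i)_{i=0}^{s}$. Then
\begin{equation*}
	n + \omega \cdot \bb h + r_\omega = 2\left( n' + \omega\cdot \bb h' \right) + \left( e_0 + \sum_{i=1}^s \omega_i e_i + r_\omega \right),
\end{equation*}
and applying the recursion $t(2m+a) = (-1)^a t(m + \lfloor a/2 \rfloor)$ (valid for any integer $a$, using $t(-m)=t(m)$) with $a = e_0 + \sum_i \omega_i e_i + r_\omega$ turns the $\omega$-factor into $(-1)^{e_0 + \sum_i \omega_i e_i + r_\omega} \, t\!\left( n' + \omega\cdot\bb h' + \delta(\bb r;\bb e)_\omega \right)$. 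Taking the product over $\omega \in \Qs$, the sign collects to $(-1)^{\abs{\bb r}}$ times $(-1)^{2^{s-1} e_0 + \sum_{i=1}^s 2^{s-1} e_i}$ — but each $e_i$ appears in exactly $2^{s-1}$ of the $\omega$'s (those with $\omega_i = 1$ for $i\ge 1$; all $2^s$ for $e_0$), and $2^{s-1}$ is even since $s \ge 2$, so that parity contribution vanishes. Hence the product equals $(-1)^{\abs{\bb r}} \prod_{\omega} t\!\left(n' + \omega\cdot\bb h' + \delta(\bb r;\bb e)_\omega\right)$.

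Next I would take the expectation. The cube $\{n + \omega\cdot\bb h : \omega \in \Qs\}$ ranging over $[2^L]$ corresponds, after halving, to the cube $\{n' + \omega\cdot\bb h' : \omega\in\Qs\}$ ranging over (essentially) $[2^{L-1}]$, with $\bb e$ equidistributed over $\{0,1\}^{s+1}$ and independent of the halved parameters. Averaging over $\bb e$ and over the halved cube reproduces exactly $\EE_{\bb e} A(L-1, \delta(\bb r;\bb e))$ up to the shift by $\delta(\bb r;\bb e)_\omega$, which is already built into the definition of $A$.

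The main obstacle — and the only place the $O(2^{-L})$ error enters — is the mismatch in the ranges of summation. Requiring the full cube $n + \omega\cdot\bb h \in [2^L]$ is not quite the same as requiring $n' + \omega\cdot\bb h' \in [2^{L-1}]$ together with free digits $\bb e$: boundary cubes near $0$ and $2^L$, and the interplay between the constraint on $n+\omega\cdot\bb h$ and the rounding in $n = 2n'+e_0$, $h_i = 2h_i' + e_i$, introduce a discrepancy. One must check that the number of cubes affected, and the discrepancy in the normalizing counts, is $O(2^{-L})$ relative to the total (the total number of valid cubes in $[2^L]$ is $\Theta(2^{L(s+1)})$, and the boundary defect is $O(2^{Ls})$), so that replacing one averaging convention by the other costs only $O(2^{-L})$. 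Since $t$ is bounded by $1$, all such errors are controlled trivially once the counting is done. I would carry out this boundary bookkeeping carefully but briefly, as it is the crux; the algebraic identity for the sign and the shift $\delta$ is then immediate from the computation above.
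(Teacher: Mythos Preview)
Your proposal is correct and follows essentially the same route as the paper: decompose $n=2n'+e_0$, $h_i=2h'_i+e_i$, use the recursion for $t$ to extract the sign $(-1)^{r_\omega+(1,\omega)\cdot\bb e}$, observe that the $\bb e$-dependent part of $\sum_\omega\bigl(r_\omega+(1,\omega)\cdot\bb e\bigr)$ is even (this is exactly the paper's computation $S(\bb e)\equiv\abs{\bb r}\bmod 2$), and absorb the boundary mismatch between the cube counts in $[2^L]$ and $[2^{L-1}]$ into the $O(2^{-L})$ term. One small slip: you wrote the $e_0$-contribution to the exponent as $2^{s-1}e_0$ but then correctly noted parenthetically that $e_0$ appears in all $2^s$ factors; either way the parity is even, so the conclusion is unaffected.
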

\begin{proof}
	For any cube $Q = \set{ n + \omega \cdot \bb h }{ \omega \in \Qs }$, there exists a choice of a unique cube $Q' = \set{ n' + \omega \cdot \bb h' }{ \omega \in \Qs }$ and a vector $\bb e \in \{0,1\}^s$ such that $Q = \set{ 2 (n' + \omega \cdot \bb h') + (1,\omega) \cdot \bb e }{ \omega \in \Qs }$; we simply take $n = 2n' + e_0$ and $h_i = 2h_i + e_i$. Moreover, if $Q \subset [2^L]$ is chosen uniformly at random, then $Q' \subset [2^{L-1}]$ with probability $1 - O(2^{-L})$, and conversely if $Q' \subset [2^{L-1}]$ and $\bb e \in \{0,1\}^s$ are chosen uniformly at random then $Q \subset [2^{L}]$ with probability $1 - O(2^{-L})$.
It follows that 
\begin{align*}
		A(L, \bb r) &= 
		\EE_{ \bb e } \EE_{n', \bb h' } \prod_{\omega \in \Qs} t( 2 n' + 2\omega \cdot \bb h' + (1,\omega) \cdot \bb e + r_\omega ) + O(2^{-L}) 
		\\& = \EE_{ \bb e } \EE_{n', \bb h' } \prod_{\omega \in \Qs} (-1)^{(1,\omega) \cdot \bb e + r_\omega } t( n' + \omega \cdot \bb h' + \delta(\bb r; \bb e)_\omega)  + O(2^{-L})
		\\& = \EE_{ \bb e } (-1)^{S(\bb e)} A(L-1,\delta(\bb r; \bb e)) + O(2^{-L}),
\end{align*}
where $\delta(\bb r; \bb e)$ is defined by \eqref{eq:005} and $S(\bb e) = \sum_\omega r_\omega + 2^s e_0 +  2^{s-1} \sum_{i=1}^s e_i\equiv \abs{\bb r} \bmod{2}$ (expectation is over $\bb e \in \{0,1\}^s$ and $\set{n' +\omega\cdot \bb h'}{\omega \in \{0,1\}^s } \subset [2^{L-1}]$).
\end{proof}

\renewcommand{\R}{\mathcal{R} }

Lemma \ref{lem:recurrence-A-TM} motivates us to introduce a random walk $\cW_{\mathrm{TM}}$ on a directed graph $G = (V,E)$ defined as follows. The set of vertives is $V = V_+ \cup V_-$ where $V_\pm = \set{ (\bb r, \pm 1) }{ \bb r \in \ZZ^{\Qs} }$. The transition probabilities are given by
\begin{equation}
	\label{eq:deg-P-TM}
	P\bra{ (\bb r, \pm 1) ; ( \bb r', \pm (-1)^{\abs{ \bb r}}  } 
	= 
	\PP_{\bb e }\bra{ 
\delta(\bb r; \bb e) = \bb r'
},
\end{equation}
where $\bb e = (e_i)_{i=0}^s$ is uniformly distributed in $\bbra{0,1}^{s+1}$ and $\delta(\bb r; \bb e)$ is given by \eqref{eq:005}. (By convention, the two occurrences of the symbol $\pm$ both denote the same sign.) The remaining transition probabilities (i.e. those where the signs do not agree) are declared to be identically $0$.

The set $E$ of (directed) edges of $G$ consists of the pairs $(v,v') \in V^2$ with $P(v,v') > 0$; hence the edge $(\bb r, \pm 1) \to (\bb r', \pm (-1)^{\abs{\bb r}})$ is present if and only if there exists $\bb e \in \{0,1\}^{s+1}$ such that $\delta(\bb r; \bb e) = \bb r'$ (with $\delta(\bb r; \bb e)$ given by \eqref{eq:005}). We will be particularly interested in the graph $G_0$ supported on the vertices $V_0$ reachable from the initial vertex $v_0 = (\bb 0, +1)$.

We note that $\cW_{\mathrm{TM}}$ comes with a natural symmetry $\R \colon V \to V$ given by $(\bb r, \pm 1) \mapsto (\bb r, \mp 1)$. We have $\R ( \R( v ) ) = v $ and $P( \R(v), \R(v') ) = P( v, v')$ for all $v,v' \in V$. In particular, $\R$ preserves the edges of $G$.

Denote further by $P^{(l)}(v,v')$ the probability of reaching vertex $v'$ after $l$ steps, starting from $v$. Iterating Lemma \ref{lem:recurrence-A-TM} we obtain the following formula. 

\begin{corollary}\label{cor:recurrence-A-TM}
	The averages $A(L, \bb r)$ satisfy for any $l < L$ the recursive relation
	\begin{align}
		\label{eq:recurrence-A-TM-IIZ}
		A(L, \bb r) = \sum_{ \bb r', \sigma } 
		P^{(l)} \big( (\bb r, +1), (\bb r',\sigma) \big) \sigma A(L-l, \bb r' )
		+ O(2^{-(L-l)}),
	\end{align}
	where the sum runs over all pairs $(\bb r',\sigma) \in V$ which are reachable from $(\bb r, +1)$.
	In particular,
	\begin{align}
		\label{eq:recurrence-A-TM-II}
		\norm{ t }_{U^s[N]}^{2^s} = \sum_{ \bb r' } 
		 \bra{ P^{(l)}\bra{ v_0, (\bb r',+1) } - P^{(l)}\bra{ v_0, (\bb r',-1) } }  A(L-l, \bb r') + O(2^{-(L-l)}),
	\end{align}
	where the sum runs over all $\bb r'$ such that at least one of $(\bb r', \pm 1)$ belongs to $V_0$.
\end{corollary}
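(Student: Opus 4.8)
The plan is to establish \eqref{eq:recurrence-A-TM-IIZ} by induction on $l$, using Lemma \ref{lem:recurrence-A-TM} both for the base case $l=1$ and as the one-step engine of the inductive step, and then to deduce \eqref{eq:recurrence-A-TM-II} by specializing $\bb r = \bb 0$ and splitting the sum according to the sign coordinate. For $l = 1$: by the definition \eqref{eq:deg-P-TM}, $P\bra{(\bb r, +1), (\bb r', \sigma)}$ equals $\PP_{\bb e}\bra{\delta(\bb r;\bb e)=\bb r'}$ if $\sigma = (-1)^{\abs{\bb r}}$ and $0$ otherwise, so $\sum_{\bb r', \sigma} P\bra{(\bb r,+1),(\bb r',\sigma)}\,\sigma\, A(L-1,\bb r') = (-1)^{\abs{\bb r}}\EE_{\bb e} A(L-1, \delta(\bb r;\bb e))$, which is precisely the content of \eqref{eq:recurrence-A-TM}.

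For the inductive step, suppose \eqref{eq:recurrence-A-TM-IIZ} holds for some $l$ with $l+1 < L$. Applying the one-step relation \eqref{eq:recurrence-A-TM} at level $L-l$ to each term $A(L-l, \bb r_1)$ on its right-hand side gives $\sigma_1 A(L-l, \bb r_1) = \sigma_1(-1)^{\abs{\bb r_1}}\EE_{\bb e} A(L-l-1, \delta(\bb r_1; \bb e)) + O(2^{-(L-l)})$. The bookkeeping point is that $\sigma_1 (-1)^{\abs{\bb r_1}}$ is exactly the sign coordinate of any vertex reached from $(\bb r_1, \sigma_1)$ in one step, in accordance with \eqref{eq:deg-P-TM}; hence, after expanding $\EE_{\bb e}$ as a sum over $\bb r' = \delta(\bb r_1; \bb e)$ weighted by $P\bra{(\bb r_1,\sigma_1),(\bb r',\sigma)}$ and collecting terms, the Chapman--Kolmogorov identity $\sum_{(\bb r_1,\sigma_1)} P^{(l)}\bra{(\bb r,+1),(\bb r_1,\sigma_1)}\,P\bra{(\bb r_1,\sigma_1),(\bb r',\sigma)} = P^{(l+1)}\bra{(\bb r,+1),(\bb r',\sigma)}$ yields \eqref{eq:recurrence-A-TM-IIZ} at level $l+1$.

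It remains to check that the error stays $O(2^{-(L-l)})$. Each of the $l$ invocations of Lemma \ref{lem:recurrence-A-TM} feeding into the formula is applied at some level $M \in \{L, L-1, \dots, L-l+1\}$ and contributes an error $O(2^{-M})$; weighting by transition probabilities (whose total mass is at most $1$) and using the trivial bound $\abs{A(M,\cdot)} \leq 1$ preserves this, and summing the geometric series $\sum_{j=0}^{l-1} 2^{-(L-j)} < 2^{-(L-l)}$ gives total error $O(2^{-(L-l)})$ with implied constant depending only on $s$ (through Lemma \ref{lem:recurrence-A-TM}), not on $l$. Finally, \eqref{eq:recurrence-A-TM-II} is the case $\bb r = \bb 0$ of \eqref{eq:recurrence-A-TM-IIZ}: then $A(L,\bb 0) = \norm{t}_{U^s[2^L]}^{2^s}$ (with $N = 2^L$), and splitting $\sum_{\bb r',\sigma}$ into its $\sigma=+1$ and $\sigma=-1$ parts produces the displayed difference of probabilities, while the $\bb r'$ with both $(\bb r',\pm1)\notin V_0$ may be dropped since they are unreachable from $v_0$. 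I do not anticipate a genuine obstacle; the only delicate points are the sign bookkeeping in the inductive step and verifying that the accumulated error does not acquire a spurious factor of $l$.
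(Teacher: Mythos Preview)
Your proof is correct and follows essentially the same approach as the paper: rewrite the one-step recurrence of Lemma~\ref{lem:recurrence-A-TM} in terms of the transition probabilities to get the case $l=1$, iterate (equivalently, induct using Chapman--Kolmogorov) for the main term, and sum the geometric series $\sum_{j=0}^{l-1}2^{-(L-j)} \ll 2^{-(L-l)}$ for the error. The paper's version is simply terser; your explicit treatment of the sign bookkeeping and of why the error does not pick up a factor of $l$ is a welcome elaboration rather than a different argument.
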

\begin{proof}
	Apply Lemma \ref{lem:recurrence-A-TM} $l$ times. Note that the recurrence relation \eqref{eq:recurrence-A-TM} can be equivalently written as
	$$
	A(L, \bb r) = \sum_{ \bb r', \sigma } 
		P \big( (\bb r, +1), (\bb r',\sigma) \big) \sigma A(L-1, \bb r' )
		+ O(2^{-L}),
	$$
	which is the same as \eqref{eq:recurrence-A-TM-IIZ} for $l = 1$. For general $l \in \NN$, the main term follows directly from how $P^{(l)}(v,v')$ are defined.
	The total error term is $\ll 2^{-L} + 2^{(-L-1)} + \dots + 2^{-(L-l)} \ll 2^{-(L-l)}$. 
\end{proof}

Recall that a directed graph is strongly connected if there exists a directed path from any vertex to any other vertex. A graph is aperiodic if the greatest common divisor of all cycles present in the graph equals $1$.

\begin{proposition}\label{lem:G-TM-connected}
	Let $G_0$ be the graph constructed above. Then $G_0$ is finite, strongly connected, aperiodic, and preserved by $\R$.
\end{proposition}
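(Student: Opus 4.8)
The plan is to establish the four claimed properties of $G_0$ in sequence, with the bulk of the work going into finiteness and strong connectedness. For \emph{finiteness}, I would first show that the coordinates $r_\omega$ stay bounded along any walk from $v_0$. From \eqref{eq:005} we have $\abs{\delta(\bb r; \bb e)_\omega} \leq (\abs{r_\omega} + (1 + s))/2$ whenever $\abs{r_\omega} \geq s+1$, so the quantity $\max_\omega \abs{r_\omega}$ is contracted toward the region $\max_\omega \abs{r_\omega} \leq s+1$ and never escapes it once inside; starting from $\bb r = \bb 0$ we therefore only ever see $\bb r$ with all $\abs{r_\omega} \leq s+1$ (one should check the constant carefully, but some absolute bound of this shape clearly holds). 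Since there are finitely many such $\bb r \in \ZZ^{\Qs}$ and only two sign values, $V_0$ is finite and $G_0$ is a finite directed graph.

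For \emph{invariance under $\R$}: $\R$ fixes the $\bb r$-coordinate and flips the sign, and the identity $P(\R(v), \R(v')) = P(v,v')$ was already recorded before the statement, so $\R$ maps edges to edges; but $v_0 = (\bb 0, +1)$ is \emph{not} fixed by $\R$, so I must argue separately that $\R(v_0) = (\bb 0, -1)$ is also reachable from $v_0$. This will follow from strong connectedness once I show $(\bb 0, -1) \in V_0$, which in turn I get by exhibiting a single walk from $(\bb 0, +1)$ to $(\bb 0, -1)$: take $\bb e$ with $e_0 = 1$ and $e_i = 0$ for $i \geq 1$, so $\delta(\bb 0; \bb e)_\omega = \floor{1/2} = 0$ for all $\omega$, giving an edge $(\bb 0, +1) \to (\bb 0, (-1)^{\abs{\bb 0}}) = (\bb 0, +1)$ — that is a self-loop on the $+$ side, so instead I need a walk that changes $\abs{\bb r}$ to an odd value and then returns, which is where the real combinatorial checking lives (see below). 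Once $\R(v_0) \in V_0$, then $V_0$ is $\R$-invariant (it is the reachable set from $v_0$, and $\R$ is a graph automorphism carrying $v_0$ into $V_0$), hence $G_0$ is preserved by $\R$.

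The heart of the matter is \emph{strong connectedness}, and I expect this to be the main obstacle. I would proceed in two stages. First, show every vertex of $V_0$ can reach the pair $\{(\bb 0,+1),(\bb 0,-1)\}$: from any $\bb r$, repeatedly applying the contraction estimate above with a suitably chosen $\bb e$ (e.g.\ $\bb e = \bb 0$ gives $\delta(\bb r;\bb 0)_\omega = \floor{r_\omega/2}$, halving each coordinate toward $0$) drives $\bb r$ to $\bb 0$ in $O(\log(s))$ steps; tracking the accumulated sign shows we land on one of $(\bb 0,\pm 1)$. Second, show $(\bb 0, +1)$ can reach every vertex of $V_0$ — but that is automatic, since $V_0$ \emph{is} the set of vertices reachable from $v_0 = (\bb 0,+1)$. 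Combining, $V_0$ reaches $\{(\bb 0,\pm 1)\}$ and $\{(\bb 0,\pm 1)\}$ — using the $\R$-symmetry and a short path between $(\bb 0,+1)$ and $(\bb 0,-1)$ — reaches all of $V_0$, giving strong connectivity. The delicate point is producing that path from $(\bb 0,+1)$ to $(\bb 0,-1)$: one wants $\bb e$-choices generating a walk $\bb 0 \to \bb r^{(1)} \to \cdots \to \bb 0$ along which $\sum_j \abs{\bb r^{(j)}}$ is odd. For instance choosing $\bb e$ with exactly one $e_i = 1$ ($i \geq 1$) produces $r_\omega = \omega_i \in \{0,1\}$, so $\abs{\bb r} = \#\{\omega : \omega_i = 1\} = 2^{s-1}$, which is even for $s \geq 2$; one then needs a cleverer two- or three-step excursion, and verifying that such $\bb e$-sequences exist for all $s \geq 2$ is the computation I would carry out in detail.

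Finally, \emph{aperiodicity}: it suffices to exhibit one loop of length $1$ at some vertex, or loops of coprime lengths. The self-loop $(\bb 0,+1) \to (\bb 0,+1)$ constructed above (via $\bb e = \bb 0$, or $e_0=1$ and $e_i = 0$) has length $1$, so the gcd of all cycle lengths through $v_0$ is $1$; since $G_0$ is strongly connected this forces the whole graph to be aperiodic.
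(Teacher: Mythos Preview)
Your overall architecture is exactly the paper's: aperiodicity via the self-loop at $v_0$ with $\bb e = \bb 0$; finiteness via a contraction estimate (the paper gets the sharper bound $0 \le r_\omega \le \abs{\omega}$, but your cruder bound suffices); strong connectedness by driving every vertex to $\{(\bb 0,+1),(\bb 0,-1)\}$ along edges labelled $\bb e = \bb 0$ and then connecting those two; and $\R$-invariance reduced to $(\bb 0,-1) \in V_0$.

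The one genuine gap is precisely the step you flag as ``the computation I would carry out in detail'': the explicit walk from $(\bb 0,+1)$ to $(\bb 0,-1)$. Two remarks. First, your trial step ``$\bb e$ with exactly one $e_i=1$ ($i\ge 1$)'' does not give $r_\omega = \omega_i$; from $\bb r = \bb 0$ one gets $\delta(\bb 0;\bb e)_\omega = \floor{\omega_i/2} = 0$. You need $e_0 = 1$ as well to get $\floor{(1+\omega_i)/2} = \omega_i$. Second, and more importantly, a ``two- or three-step excursion'' does not obviously suffice uniformly in $s$: after one step from $\bb 0$ the total $\abs{\bb r}$ one can produce tends to be a multiple of a power of $2$, and getting an odd value of $\abs{\bb r}$ along the walk requires more care. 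The paper uses an $(s{+}1)$-step path: set $\bb r^{(0)} = \bb r^{(s+1)} = \bb 0$ and, for $1 \le j \le s$, take $r^{(j)}_\omega = 1$ if $\omega_1 = \cdots = \omega_j = 1$ and $0$ otherwise, so that $\abs{\bb r^{(j)}} = 2^{s-j}$. The edges $\bb r^{(0)} \to \bb r^{(1)}$ (via $e_0 = e_1 = 1$), $\bb r^{(j)} \to \bb r^{(j+1)}$ for $1 \le j \le s-1$ (via $e_{j+1} = 1$ only), and $\bb r^{(s)} \to \bb 0$ (via $\bb e = \bb 0$) all exist by direct computation; the parities $\abs{\bb r^{(j)}} = 2^{s-j}$ are even for $j < s$ and odd only at $j = s$, so the sign flips exactly once and the path lands at $(\bb 0,-1)$. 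With this construction filled in, your proof is complete and matches the paper's.
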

\begin{proof}
	Aperiodicity follows immediately from the observation that $G_0$ contains a loop at $(\bb 0, +1)$
		since $\delta((\bb 0, +1); \bb 0) = (\bb 0, +1)$.
	
	If $G$ contains an edge from $v = (\bb r, \pm 1)$ to $v' = (\bb r', \pm 1)$  then $r'_\omega = \delta(\bb r; \bb e) = \floor{ \frac{r_\omega + (1,\omega) \cdot \bb e }{2}}$ for some $\bb e \in \{0,1\}^{s+1}$, and in particular $0 \leq (1,\omega) \cdot \bb e \leq \abs{\omega}+1$. An elementary inductive argument now shows that if $(\bb r,\pm 1) \in V_0$ then $0 \leq r_\omega \leq \abs{\omega}$, which proves finiteness.

	Similarly, taking $\bb e = \bb 0 \in \{0,1\}^{s+1}$, we see that any vertex $v = (\bb r, \pm 1)$ has an edge to some $v' = (\bb r', \pm 1)$ with $\abs{ \bb r'} \leq \abs{ \bb r }/2$. Repeating this argument, we may find a path from any $v \in V_0$ to one of $(\bb 0, +1),\ (\bb 0, -1)$. Thus, to prove that $G_0$ is strongly connected, it will suffice to show that there exists a path from $(\bb 0, -1)$ to $(\bb 0, +1)$, which (in light of symmetry) is equivalent to $(\bb 0, -1) \in V_0$. Since $G$ is symmetric under $\R$, this will also imply that $\R(V_0) = V_0$.
	
	It remains to show that $(\bb 0, -1) \in V_0$. We do this by explicitly constructing the path from $(\bb 0, +1)$ to $(\bb 0, -1)$. Let $\bb r^{(0)} = \bb r^{(s+1)} = \bb 0$, and for $j = 1,2, \dots, s$ let $\bb r^{(j)} = (r^{(j)}_\omega)$ be given by
$$
	r^{(j)}_\omega = 
	\begin{cases}
		1 & \text{ if } \omega_1 = \omega_2 = \dots = \omega_i = 1,\\
		0 & \text{ otherwise.}
	\end{cases}
$$
	We claim that for each $j = 0,1,\dots,s-1$, there is an edge from $(\bb r^{(j)}, +1)$ to $(\bb r^{(j+1)}, +1)$, and for $j = s$ there is an edge from $(\bb r^{(s)}, +1)$ to $(\bb r^{(s+1)}, -1)$. 
	
	For $j = 0$, define $\bb e^{(0)}$ by $e^{(0)}_{0} = e^{(0)}_{1} = 1$ and $e^{(0)}_{i} = 0$ for $i \neq 0,1$. Direct computation shows that $\delta( \bb r^{(0)}; \bb e^{(0)} )_\omega = \floor{ \frac{1+\omega_1}{2} } = 1$ if $\omega_1 = 1$ and $0$ otherwise. We also have $\abs{ \bb r^{(0)}} = 0$. Hence, $G$ contains an edge from $(\bb r^{(0)},+1)$ to $(\bb r^{(1)},+1)$.
	
	For $1 \leq j \leq s-1$, let $e^{(j)}_{j+1} = 1$ and $e^{(j)}_{i} = 0$ for $i \neq j+1$. We compute that 
	$\delta( \bb r^{(j)}; \bb e^{(j)} )_\omega = \floor{ \frac{r^{(j)}_\omega + \omega_{j+1}}{2} } = 1$ if $\omega_{j+1} = 1$ and $r^{(j)}_\omega = 1$, and $0$ otherwise. Also, $\abs{ \bb r^{(j)} } = 2^{s-j} \equiv 0 \pmod{2}$. Hence, $G$ contains an edge from $(\bb r^{(j)},+1)$ to $(\bb r^{(j+1)},+1)$.
	
	Finally, for $j = s$, let $\bb e^{(s)} = \bb 0$. Computation similar to the one above shows that $G$ contains an edge from $(\bb r^{(s)},+1)$ to $(\bb r^{(s+1)},-1)$.
\end{proof}

\begin{corollary}\label{cor:main-TM}
	There exists a constant $c > 0$ such that $A(L, \bb 0) = O( 2^{ - c  L} )$.
\end{corollary}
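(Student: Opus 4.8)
The plan is to combine the iterated recurrence from Corollary~\ref{cor:recurrence-A-TM} with the ergodic properties of the random walk $\cW_{\mathrm{TM}}$ established in Proposition~\ref{lem:G-TM-connected}. The key point is that the signed transition probabilities $P^{(l)}(v_0,(\bb r',+1)) - P^{(l)}(v_0,(\bb r',-1))$ appearing in \eqref{eq:recurrence-A-TM-II} decay exponentially in $l$. To see this, project the walk onto its first coordinate $\bb r \in \Z^{\Qs}$, which is itself a Markov chain on the finite state space $\{ \bb r : 0 \le r_\omega \le \abs{\omega}\}$, and observe that the sign of the walk at step $l$ is a $\pm 1$-valued function of the trajectory. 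Because $G_0$ is finite, strongly connected, and aperiodic, the Perron--Frobenius theorem guarantees a spectral gap for the (unsigned) transition matrix, hence the distribution on first coordinates converges geometrically to the unique stationary distribution $\mu$. The $\R$-symmetry then forces the signed measure $P^{(l)}(v_0,(\cdot,+1)) - P^{(l)}(v_0,(\cdot,-1))$ to have total variation $O(\lambda^l)$ for some $\lambda < 1$: indeed, since $\R$ maps $v_0$-reachable trajectories of one sign to those of the other, the two measures $P^{(l)}(v_0,(\cdot,\pm1))$ have the \emph{same} sum, and their difference is governed by the same spectral gap.

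Granting that, I would bound $\abs{A(L,\bb 0)}$ as follows. Apply \eqref{eq:recurrence-A-TM-II} with $l = \floor{\theta L}$ for a small constant $\theta \in (0,1)$ to be chosen. Since $A(L-l,\bb r')$ is an average of products of $\pm 1$-valued terms we have the trivial bound $\abs{A(L-l,\bb r')} \le 1$, and the number of $\bb r'$ with at least one of $(\bb r',\pm1)$ in $V_0$ is bounded by a constant $C(s)$. Hence
\[
	\abs{A(L,\bb 0)} = \abs{\norm{t}_{U^s[2^L]}^{2^s}} \le C(s) \max_{\bb r'} \abs{ P^{(l)}(v_0,(\bb r',+1)) - P^{(l)}(v_0,(\bb r',-1)) } + O(2^{-(L-l)}) \ll \lambda^{l} + 2^{-(1-\theta)L}.
\]
Choosing $\theta$ small enough that both terms are $O(2^{-c' L})$ for some $c' > 0$ — concretely, any $\theta$ with $2^{-(1-\theta)} < 1$ works for the error, and $\lambda^{\theta L} = 2^{-(\theta \log_2(1/\lambda)) L}$ handles the main term — yields $A(L,\bb 0) = O(2^{-cL})$ with $c = c(s) > 0$, which is the claim. (One technical point: \eqref{eq:recurrence-A-TM-II} requires $l < L$, which holds for $\theta < 1$ and $L$ large; small $L$ are absorbed into the implied constant.)

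The main obstacle is making the exponential decay of the signed probabilities fully rigorous. It is not quite enough to cite a spectral gap for the unsigned chain: one must check that the cancellation between the $+1$ and $-1$ components is captured by that gap rather than being merely $o(1)$. The cleanest way is to work directly with the $2\abs{V_0/\R}$-dimensional (or $\abs{V_0}$-dimensional) signed transition operator $M$ on $G_0$, note that $\R$ conjugates $M$ to itself and hence $M$ decomposes into $\R$-symmetric and $\R$-antisymmetric parts; the symmetric part carries the Perron eigenvalue $1$ (whose eigenvector is $\R$-invariant, so it contributes equally to both signs and cancels in the difference), while all other eigenvalues — in particular every eigenvalue of the antisymmetric block — have modulus strictly less than $1$ by strong connectedness and aperiodicity of $G_0$. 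The difference $P^{(l)}(v_0,(\bb r',+1)) - P^{(l)}(v_0,(\bb r',-1))$ is a coordinate of $M^l$ applied to $v_0$ minus its $\R$-image, which lies in the antisymmetric subspace, so it decays like the second-largest eigenvalue modulus to the power $l$. Verifying that this subspace genuinely has spectral radius $< 1$ — equivalently, that $(\bb 0,+1)$ and $(\bb 0,-1)$ are not ``decoupled'' in a way that would produce a second eigenvalue of modulus $1$ — is exactly where Proposition~\ref{lem:G-TM-connected}'s strong connectedness (which links the two sign sheets via the explicit path to $(\bb 0,-1)$) does the essential work.
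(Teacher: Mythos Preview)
Your proposal is correct and follows the same overall strategy as the paper: feed Perron--Frobenius for the walk on $G_0$ into the iterated recurrence \eqref{eq:recurrence-A-TM-II}, use the $\R$-symmetry to force the limiting contribution to vanish, and then balance $l$ against $L-l$.

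That said, you have made the spectral step more laborious than necessary. The random walk $\cW_{\mathrm{TM}}$ on $V_0$ is already an ordinary (non-negative, stochastic) Markov chain --- the states $(\bb r,\pm 1)$ carry a sign \emph{label}, but the transition probabilities $P(v,v')$ themselves are genuine probabilities --- so there is no ``signed transition operator'' to tame. The paper simply applies Perron--Frobenius to this chain directly: since $G_0$ is finite, strongly connected and aperiodic, $P^{(l)}(v_0,v') \to \pi(v')$ exponentially for every $v' \in V_0$; and since $\R$ is an automorphism of the chain, the stationary distribution satisfies $\pi((\bb r',+1)) = \pi((\bb r',-1))$. Subtracting gives
\[
\abs{P^{(l)}(v_0,(\bb r',+1)) - P^{(l)}(v_0,(\bb r',-1))} = O(2^{-cl})
\]
immediately, with no need to project onto the $\bb r$-coordinate or to decompose into symmetric and antisymmetric eigenspaces. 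Your decomposition is a valid and equivalent way to see the same cancellation, but the initial detour through the projected chain on $\bb r$-coordinates and the attendant worry about whether the cancellation is ``captured by that gap rather than being merely $o(1)$'' are both unnecessary once one realises the full chain on $V_0$ is itself the right object for Perron--Frobenius. The paper then takes $l = L/2$; your $l = \lfloor \theta L\rfloor$ works just as well.
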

\begin{proof}
	Because $G$ is aperiodic and strongly connected, the Frobenius-Perron Theorem implies that there exists a stationary distribution $\pi \colon V_0 \to [0,1]$ such that for each $v,v' \in V_0$ we have $P^{(l)}(v,v') \to \pi(v')$ with exponential convergence rate: 
	\begin{equation}
	\max_{v,v' \in V_0} \abs{  P^{(l)}(v,v') - \pi(v')} = O( 2^{ - c  l} )
	\label{eq:764}
	\end{equation}		
	for some $c > 0$. Because $\R( V_0 ) = V_0$, by symmetry we have $\pi(\R(v)) = \pi(v)$ for all $v \in V_0$. Combining this with \eqref{eq:764}, we obtain
	\begin{equation}
	\max_{v,v'} \abs{  P^{(l)}(v,v') -   P^{(l)}(v,\R(v')) } = O( 2^{ - c  l} ).
	\label{eq:765}
	\end{equation}		
	Using this estimate and the trivial bound $\abs{ A(L,\bb r) } \leq 1$ in \eqref{eq:recurrence-A-TM-II}, we arrive at
	\begin{equation}
	A(L,\bb 0) = O( 2^{ - c  l} + 2^{-(L-l)}).
	\label{eq:766}
	\end{equation}		
	It remains to put $l = L/2$ (say) to conclude that $A(L,\bb 0) = O( 2^{ - c' L})$ with $c' > 0$. 
\end{proof}

\begin{remark}
	Computation of the constant $c$ in the argument above is essentially equivalent to computing the spectral gap for the matrix $\bra{ P(v,v') }_{v,v'\in V_0}$. In particular, for fixed $s$, this is a computationally tractable problem.
\end{remark}

\begin{proof}[Proof of Theorem \ref{thm:TM}]
	Split $[N]$ into intervals $I_j = [m_j2^{L_j}, (m_j+1)2^{L_j})$ where $L_1 > L_2 > \dots$. We then have
	\begin{equation}\label{eq:542}
	\norm{ t }_{U^s[N]} = \norm{ \sum_{j} 1_{I_j} t }_{U^s[N]} \leq \sum_{j} \norm{ 1_{I_j} t }_{U^s[N]}. 
	\end{equation}
	The cubes $\set{ n + \omega \cdot \bb h}{\omega \in \Qs} \subset I_j$ are precisely the translations of the cubes $\set{ n + \omega \cdot \bb h}{\omega \in \Qs} \subset [2^{L_j}]$ by $2^{L_j} m_j$. Since $t(2^{L_j} m_j + n) = t(2^{L_j} m_j)t(n)$ for $n \in [2^{L_j}]$, we conclude that 
	$$\norm{ 1_{I_j} t }_{U^s[N]} = \norm{ 1_{[{I_j}]}}_{U^s[N]} \cdot \norm{ t }_{U^s[2^{L_j}]} \ll { \frac{2^{L_j}}{N} } \norm{  t }_{U^s[2^{L_j}]}.$$
	Inserting this into \eqref{eq:542} and applying Corollary \ref{cor:main-TM} yields (with  $c' = \ffrac{c}{2^s}$ using notation therein) we obtain
	$$\norm{ t }_{U^s[N]} \ll \sum_j 2^{L_j(1-c')}/N \ll N^{-c'}. \qedhere$$
\end{proof} 
\section{Rudin-Shapiro sequence}\label{sec:RS}

We now move on to the Rudin-Shapiro sequence $r(n) = (-1)^{f_{\mathtt{11}}(n)}$, and embark upon the proof of Theorem \ref{thm:RS}. Our argument is similar to the one in Section \ref{sec:TM}, although slightly more technical. Complications arise because of the fact that the $2$-kernel 
$$\cN_{2}(r) = \set{ n \mapsto r(2^l n + m) }{ 0 \leq m < 2^l} = \{ \pm r(n), \pm r(2n+1)\} $$
contains other functions apart from $\pm r(n)$, which forces us to deal with averages more general than those in \eqref{eq:def-A-TM}. 

A key feature of $r(n)$ which allows our argument to work is that $\cN_2(r)$ is symmetric, i.e.\ $\cN_2(r) = -\cN_2(r)$. Denote $\cN_2^+(r) = \{ r(n), r(2n+1) \}$, and fix from now on the value of $s \in \NN_{\geq 2}$. We will study the  averages 
\begin{equation}
	\label{eq:def-A-RS}
	A(L, \bb a, \bb r) := \EE_{n, \bb h }\prod_{\omega \in \Qs} a_\omega ( n + \omega \cdot \bb h + r_\omega),
\end{equation}
where $\set{n + \omega \cdot \bb h}{\omega \in \Qs} \subset [2^L]$, $L \in \NN_0$, $\bb a = (a_\omega)_{\omega \in \Qs}$ with $a_\omega \in \cN_2^+(r)$ and $\bb r = (r_\omega)_{\omega \in \Qs}$ with $r_\omega \in \ZZ$.

We record a recurrence relation analogous to Lemma \ref{lem:recurrence-A-TM}. Now, it will be more convenient to consider $l$ consecutive steps. \begin{lemma}\label{lem:recurrence-A-RS}
	For any $l$, the averages $A(L, \bb a, \bb r)$ obey the recursive relation:
	\begin{equation}
		\label{eq:recurrence-A-RS}
		A(L, \bb a, \bb r) = \EE_{ \bb e} A(L-l, \delta^{(l)}( \bb a; \bb r ,\bb e), \delta^{(l)}( \bb r; \bb e) ) + O(2^{-(L-l)}),
	\end{equation}
	where the average is taken over $\bb e = (e_i)_{i=0}^s \in [2^l]^{s+1}$, $\delta^{(l)}( \bb a; \bb r ,\bb e)$ is given by 
	\begin{equation}
		\label{eq:004a}
		\delta^{(l)}( \bb a; \bb r ,\bb e)_\omega(n) = a_\omega\bra{ 2^l n + \bra{r_\omega + (1,\omega) \cdot \bb e \bmod 2^l } },
	\end{equation}
	 and $ \delta^{(l)}( \bb r; \bb e)$ is given by 
	\begin{equation}
		\label{eq:005b}
	  \delta^{(l)}( \bb r; \bb e)_{\omega} = \floor{ \frac{ r_\omega + (1,\omega) \cdot \bb e  }{2^l} }.
	\end{equation}
\end{lemma}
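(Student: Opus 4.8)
The plan is to prove Lemma \ref{lem:recurrence-A-RS} by the same base-$2$ digit-splitting argument as in Lemma \ref{lem:recurrence-A-TM}, but iterated $l$ times at once and keeping track of which element of $\cN_2^+(r)$ sits at each vertex. First I would establish the case $l=1$, which is the exact analogue of Lemma \ref{lem:recurrence-A-TM}: given a cube $Q = \set{n + \omega\cdot\bb h}{\omega \in \Qs} \subset [2^L]$, write $n = 2n' + e_0$ and $h_i = 2h_i' + e_i$ with $e_i \in \{0,1\}$, so that $Q$ is (up to translation structure) the image under $x \mapsto 2x + (1,\omega)\cdot\bb e$ of the cube $Q' = \set{n' + \omega\cdot\bb h'}{\omega \in \Qs} \subset [2^{L-1}]$, and the $O(2^{-L})$ error accounts for the cubes near the boundary that fail to descend or lift cleanly. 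The key identity is then the functional equation for $a_\omega \in \cN_2^+(r)$: since $\cN_2(r)$ is a $2$-kernel of a $2$-automatic sequence, each $a_\omega(2x + \epsilon)$ equals $\pm a_\omega'(x)$ for some $a_\omega' \in \cN_2^+(r)$ depending on $\epsilon$ — more precisely, one uses $r(2m) = r(m)$, $r(4m+1) = r(m)$, $r(4m+3) = -r(2m+1)$ and the symmetry $\cN_2(r) = -\cN_2(r)$ — but to avoid sign bookkeeping I would instead fold all signs into the argument, noting that for any fixed residue $b$ modulo $2^l$ the map $x \mapsto a_\omega(2^l x + b)$ is again (up to a global sign that one can absorb or, better, that does not arise because we are free to choose $b$ as the representative in $[2^l]$) a member of $\cN_2^+(r)$, which is precisely what \eqref{eq:004a} records. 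Writing $r_\omega + (1,\omega)\cdot\bb e = 2^l \floor{(r_\omega + (1,\omega)\cdot\bb e)/2^l} + (r_\omega + (1,\omega)\cdot\bb e \bmod 2^l)$ gives the split of the constant term into the new "height" $\delta^{(l)}(\bb r;\bb e)_\omega$ of \eqref{eq:005b} and the residue that gets absorbed into the new sequence $\delta^{(l)}(\bb a;\bb r,\bb e)_\omega$ of \eqref{eq:004a}.

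Second, I would iterate: applying the $l=1$ step $l$ times and composing the digit-extraction maps. Here the point is that extracting the lowest bit $l$ times from $n + \omega\cdot\bb h + r_\omega$ is the same as extracting the lowest $l$ bits at once; concretely, if $\bb e^{(1)}, \dots, \bb e^{(l)} \in \{0,1\}^{s+1}$ are the bit-vectors at the successive stages, then setting $\bb e = \sum_{k=1}^{l} 2^{k-1}\bb e^{(k)} \in [2^l]^{s+1}$ one checks by a routine induction that the composite of the $l$ height-maps $\bb r \mapsto \delta(\bb r; \bb e^{(k)})$ equals $\bb r \mapsto \delta^{(l)}(\bb r; \bb e)$ as in \eqref{eq:005b}, and that the composite of the $l$ sequence-updates produces exactly \eqref{eq:004a} with that same $\bb e$. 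The uniform distribution of $\bb e^{(1)}, \dots, \bb e^{(l)}$ over $\{0,1\}^{s+1}$ corresponds under this bijection to the uniform distribution of $\bb e$ over $[2^l]^{s+1}$, so the $l$-fold expectation collapses to a single expectation over $\bb e \in [2^l]^{s+1}$. The error terms accumulate as $O(2^{-L}) + O(2^{-(L-1)}) + \dots + O(2^{-(L-l)}) = O(2^{-(L-l)})$, matching the statement.

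I expect the main obstacle to be the sign bookkeeping in the $l=1$ step — specifically, verifying that $x \mapsto a_\omega(2x + \epsilon)$, for $\epsilon \in \{0,1\}$ and $a_\omega \in \cN_2^+(r) = \{r(n), r(2n+1)\}$, always lands (with the appropriate sign convention) inside $\cN_2^+(r)$ rather than in $-\cN_2^+(r)$. The relation $r(4m+3) = -r(2m+1)$ shows a genuine sign can appear (when $a_\omega(x) = r(2x+1)$ and $\epsilon = 1$, giving $r(4m+3) = -r(2m+1)$), so one must check that such signs cancel in the product $\prod_{\omega \in \Qs} a_\omega(\cdots)$ — or, more cleanly, argue that because $\cN_2^+(r)$ is preserved by $x \mapsto 2x$ and by $x \mapsto 2x+1$ up to sign, and because \eqref{eq:004a} is stated as an equality of \emph{functions} (so any sign is allowed to be carried inside $a_\omega$ itself once we observe $-r(2n+1) \in \cN_2(r)$, hence equals some element of $\cN_2^+(r)$ evaluated at a shifted argument via the symmetry $\cN_2(r) = -\cN_2(r)$), the definition \eqref{eq:004a} is well-posed with $\delta^{(l)}(\bb a;\bb r,\bb e)_\omega \in \cN_2^+(r)$. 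Once this is pinned down, the rest is the mechanical digit-composition and boundary-counting already carried out in the Thue–Morse case, and I would present it succinctly by reference to the proof of Lemma \ref{lem:recurrence-A-TM}.
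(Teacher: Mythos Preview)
Your iterative approach (prove $l=1$, then compose $l$ times) is valid and yields the statement, but the paper does it more directly: for arbitrary $l$, write $n = 2^l n' + e_0$ and $h_i = 2^l h_i' + e_i$ with $e_i \in [2^l]$ in one go, so that each cube $Q \subset [2^L]$ corresponds (up to an $O(2^{-(L-l)})$ boundary error) to a pair consisting of a cube $Q' \subset [2^{L-l}]$ and a vector $\bb e \in [2^l]^{s+1}$, and then substitute straight into the definition of $A$. This avoids the composition-of-maps bookkeeping and the geometric summation of errors; your route recovers the same formula but at the cost of an extra induction you do not need.

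More importantly, the sign discussion you flag as the ``main obstacle'' is both unnecessary for this lemma and not quite right. The function $\delta^{(l)}(\bb a;\bb r,\bb e)_\omega$ in \eqref{eq:004a} is simply $n \mapsto a_\omega(2^l n + b)$ for some $b \in [2^l]$; this lies in $\cN_2(r)$, but the lemma never claims it lies in $\cN_2^+(r)$, and the formula defining $A(L,\bb a,\bb r)$ makes perfect sense for any tuple of functions. So \eqref{eq:recurrence-A-RS} is a straightforward identity of averages with no sign to track. Your assertion that choosing $b \in [2^l]$ prevents a sign from appearing is false --- for instance $a_\omega(n) = r(2n+1)$ with $l=1$, $b=1$ gives $r(4n+3) = -r(2n+1) \notin \cN_2^+(r)$ --- but fortunately nothing in the lemma depends on this. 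The reduction back to $\cN_2^+(r)$ and the extraction of the sign $\sigma = \pm 1$ happens only afterwards in the paper, when the random-walk graph $G$ is built; keep that separation and the present proof is a two-line computation.
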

\begin{proof}
	Fix the value of $l$. Any cube $Q = \set{ n + \omega \cdot \bb h}{ \omega \in \{0,1\}^s}$ can be uniquely written as $\set{ 2^l(n' + \omega \cdot \bb h') + (1,\omega) \cdot \bb e}{ \omega \in \{0,1\}^s}$, where $\bb e \in [2^{l}]^{s+1}$. Up to errors of the order of $O(2^{-(L-l)})$, choosing $Q \subset [2^L]$ uniformly at random is equivalent to choosing $\bb e \in [2^{l}]^{s+1}$ and $Q' = \set{ n' + \omega \cdot \bb h'}{ \omega \in \{0,1\}^s} \subset [2^{L-l}]$ uniformly at random, hence
	\begin{align*}
		A(L, \bb a, \bb r) &= 
		\EE_{ \bb e } \EE_{n', \bb h' } \prod_{\omega \in \Qs} a_\omega( 2^k( n' + \omega \cdot \bb h') + (1,\omega) \cdot \bb e + r_\omega ) + O(2^{-(L-l)}) 
		\\& = \EE_{ \bb e } \EE_{n', \bb h' } \prod_{\omega \in \Qs} \delta^{(l)}(\bb a; \bb r, \bb e)_\omega ( n' + \omega \cdot \bb h' + \delta^{(l)}(\bb r; \bb e)_\omega)  + O(2^{-(L-l)}),
	\end{align*}
	which is precisely the stated formula.
\end{proof}
For $l = 1$, we write $\delta$ for $\delta^{(1)}$. Note that the symbol $\delta$ is used to denote several different functions, but this will not lead to ambiguity because it can always be inferred from the arguments which function is meant.

Like in the previous section, we introduce a random walk $\cW_{\mathrm{RS}}$ on a graph $G = (V,E)$, which is associate to the averages $A(L,\bb a, \bb r)$. The set of vertices $V$ consists of triples $(\bb a, \bb r, \pm 1)$, where $a_\omega \in \cN^+_2(r)$ and $r_\omega \in \ZZ$. For $v = (\bb a, \bb r, \sigma) \in V$, we write $A(L,v) = \sigma A(L,\bb a, \bb r)$.

Using the fact that $\cN_2(r) = \cN_2^+(r) \cup (-\cN_2^+(r))$, we see that for any $\bb a$ with $a_\omega \in \cN_2(r)$, we can find $\barbb a = (\bar a_\omega)_{\omega \in \Qs}$ with $\bar a_\omega \in \cN_2^+(r)$ and $\sigma = \pm 1$, such that $\prod_{\omega} a_\omega(x_\omega) = \sigma \prod_{\omega} \bar a_\omega(x_\omega)$. In particular, for any $\bb r$ we have $A(L,\bb a, \bb r) = A(L, v)$ for any $L$, where $v = (\barbb a, \bb r, \sigma) \in V$.

Let $l \in \NN$ be fixed. Then, for $\bb e \in [2^l]^{s+1}$ and $v  = (\bb a, \bb r, \sigma) \in V$, we let $\delta^{(l)}(v;\bb e) \in V$ denote the vertex constructed above, corresponding to the averages $ A(L, \delta^{(l)}( \bb a; \bb r ,\bb e), \delta^{(l)}(\bb r; \bb e) )$. (In other words, $\delta^{(l)}(v;\bb e) = (\bb a', \bb r', \sigma')$, where $a'_\omega = \pm \delta^{(l)}(\bb a; \bb r, \bb e)_\omega$, $r'_\omega = \delta^{(l)}( \bb r; \bb e)$ and $\sigma' = \pm 1$ is chosen so that $\sigma' \prod_\omega a_\omega(x_\omega) = \sigma \prod_\omega \delta^{(l)}(\bb a; \bb r, \bb e)_\omega(x_\omega)$).

The transition probabilities are given by $P(v,v') = \PP_{\bb e \in \{0,1\}^{s+1}}( \delta(v; \bb e) = v')$ for $v,v' \in V$, so that \eqref{eq:recurrence-A-RS} for $l = 1$ is equivalent to
	\begin{equation}
		\label{eq:recurrence-A-RS-II}
		A(L, v) = \sum_{v' \in V}
		 P\bra{v, v' } A(L -1, v') + O(2^{-L}).
	\end{equation}

The edge from $v$ to $v'$ is present in the edge set $E$ of $G$ if $P(v,v') > 0$. 

More generally, for arbitrary $l \in \NN$ we have 
\begin{equation}
		\label{eq:recurrence-A-RS-III}
A(L, v) = \sum_{v' \in V}  P^{(l)}\bra{v, v' } A(L - l, v') + O(2^{-(L-l)}),
	\end{equation}
where $P^{(l)}\bra{v, v' } = \PP_{\bb e \in [2^l]^{s+1}}( \delta^{(l)}(v; \bb e) = v')$ denotes the probability of transition from $v$ to $v'$ in $l$ steps. Accordingly, a path of length $l$ from $v = (\bb a, \bb r, +1)$ to $v' \in V$
  exists if and only if the average $A(L-l,v')$ is present on the right hand side of \eqref{eq:recurrence-A-RS}, meaning that there exists $\bb e = (e_i)_{i=0}^k$, $0 \leq e_i < 2^l$, such that $\delta^{(l)}( (\bb a, \bb r, +1); \bb e) = v'$. 
Following the same reasoning as before, we note that $G$ has a natural symmetry $\R \colon V \to V$ given by $(\bb a, \bb r, \sigma) \mapsto (\bb a, \bb r, -\sigma)$, which preserves the transition probabilities. We will denote by $V_0$ the set of vertices reachable from the initial vertex $v_0 = ((r)_{\omega \in \Qs}, \bb 0, +1)$ and by $G_0$ the induced graph. 

\begin{proposition}\label{lem:G-RS-connected}
	Let $G_0$ be the graph constructed above. Then $G_0$ is finite, strongly connected, aperiodic, and preserved by $\R$.
\end{proposition}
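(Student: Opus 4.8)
The plan is to mirror the structure of the proof of Proposition~\ref{lem:G-TM-connected}, adapting each step to account for the larger state space (the extra coordinate $\bb a$ recording which element of $\cN_2^+(r)$ sits at each $\omega$). I would establish the four properties in the order: aperiodicity, finiteness, then strong connectedness via the symmetry $\R$.

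\emph{Aperiodicity.} I would exhibit a loop at $v_0 = ((r)_{\omega}, \bb 0, +1)$. Taking $\bb e = \bb 0 \in [2]^{s+1}$ in $\delta = \delta^{(1)}$, we get $\delta(\bb r;\bb e) = \bb 0$ and $\delta(\bb a;\bb r,\bb e)_\omega(n) = r(2n)= r(n)$, so the $\bb a$-coordinate is unchanged and the sign stays $+1$; hence $\delta(v_0;\bb 0) = v_0$. A loop forces $\gcd$ of cycle lengths to be $1$.

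\emph{Finiteness.} As in the Thue--Morse case, if there is an edge from $(\bb a, \bb r, \sigma)$ to $(\bb a', \bb r', \sigma')$ then $r'_\omega = \floor{(r_\omega + (1,\omega)\cdot\bb e)/2}$ with $0 \le (1,\omega)\cdot\bb e \le \abs{\omega}+1$, so the same elementary induction shows $0 \le r_\omega \le \abs{\omega}$ for every vertex of $V_0$; the $\bb a$-coordinate ranges over the finite set $(\cN_2^+(r))^{\Qs}$ and the sign over $\{\pm 1\}$, so $V_0$ is finite.

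\emph{Strong connectedness.} First, taking $\bb e = \bb 0$ repeatedly contracts $\bb r$: from any $v = (\bb a, \bb r, \sigma)\in V_0$ one reaches in $\abs{\bb r}$-many halving steps a vertex with $\bb r = \bb 0$, i.e.\ of the form $(\bb a^*, \bb 0, \sigma^*)$ with $a^*_\omega \in \{r(n), r(2n+1)\}$ (using $r(2(2n+1)) = r(2n+1)$ the $\bb a$-coordinate stabilizes once $\bb r = \bb 0$, up to the finitely many remaining possibilities). It then suffices to show that each such $(\bb a^*, \bb 0, \pm 1)$ can reach $v_0$. For this I would use the symmetry of the $2$-kernel, $\cN_2(r) = -\cN_2(r)$: applying a suitable $\bb e$ one can replace each occurrence of $r(2n+1)$ by $\pm r(n)$ (via $r(4n+1) = r(n)$, $r(4n+3) = -r(2n+1)$), absorbing the signs into $\sigma^*$, thereby steering the $\bb a$-coordinate to the constant tuple $(r)_\omega$; and — this is the crux — one must verify that the sign $\sigma^*$ so produced can be made $+1$, which again follows because $\cN_2(r)$ contains both signs, exactly as the explicit path $\bb r^{(0)}\to\cdots\to\bb r^{(s)}$ in the Thue--Morse proof was engineered to flip the sign. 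Since $G$ is $\R$-symmetric and $(\bb a^*, \bb 0, -1) = \R(\bb a^*, \bb 0, +1)$, reachability of $v_0$ from both signs shows $\R(V_0) = V_0$ and closes the strong-connectedness argument. I expect this last point — explicitly producing the paths that simultaneously normalize the $\bb a$-coordinate to $(r)_\omega$ and realize both signs — to be the main obstacle, since unlike the Thue--Morse case the kernel functions do not factor multiplicatively under shifts and the bookkeeping of signs $\sigma'$ across several steps is more delicate.
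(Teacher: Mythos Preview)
Your treatment of aperiodicity, finiteness, and the reduction of strong connectedness to the single claim ``$\R(v_0)$ is reachable from $v_0$'' is correct and matches the paper almost exactly. (One small sharpening: once you have reached $(\bb a^*,\bb 0,\sigma^*)$ by repeated $\bb e=\bb 0$, a single further $\bb e=\bb 0$ step already turns every coordinate of $\bb a^*$ into $r$ with \emph{no} sign change, since $r(2n)=r(n)$ and $r(4n+1)=r(n)$ are both in $\cN_2^+(r)$; so you land precisely at $v_0$ or $\R(v_0)$.)

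The genuine gap is the step you yourself flag as the crux: you do not actually construct a path from $v_0$ to $\R(v_0)$. Saying that it ``follows because $\cN_2(r)$ contains both signs, exactly as the explicit path $\bb r^{(0)}\to\cdots\to\bb r^{(s)}$ in the Thue--Morse proof'' is not a proof. In the Thue--Morse case the sign at each step is governed by the simple parity $(-1)^{\abs{\bb r}}$, and the path was engineered so that exactly one step has odd $\abs{\bb r}$. For Rudin--Shapiro the sign is instead the product over $\omega$ of the signs picked up when normalising each $\delta(\bb a;\bb r,\bb e)_\omega$ back into $\cN_2^+(r)$, and this product does not reduce to a parity; your suggestion of ``steering each $r(2n+1)$ to $\pm r(n)$ via the recurrences'' gives no mechanism for controlling the total sign across all $2^s$ coordinates simultaneously.

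The paper bypasses this bookkeeping entirely. Instead of chaining single steps, it takes one $l$-step transition with $l\ge s+2$ and $e_i=2^{i-1}$ for $1\le i\le s$, leaving $e_0\in[2^s]$ free. With this choice $(1,\omega)\cdot\bb e=e_0+\sum_i\omega_i 2^{i-1}$ ranges over $e_0+[2^s]$ as $\omega$ ranges over $\{0,1\}^s$, and since $(1,\omega)\cdot\bb e<2^{l-1}$ one gets $\delta^{(l)}(v_0;\bb e)=((r)_\omega,\bb 0,\sigma(e_0))$ with the single closed-form sign
\[
\sigma(e_0)=\prod_{m=0}^{2^s-1} r(m+e_0).
\]
It then suffices to show $\sigma$ is not constant in $e_0$, which follows from the telescoping identity $\sigma(e_0+1)/\sigma(e_0)=r(2^s+e_0)/r(e_0)$ and the observation that this ratio equals $-1$ for any $2^{s-1}\le e_0<2^s$. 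This single-jump argument is the missing idea in your proposal.
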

\begin{proof}
	Finiteness, aperiodicity, and strong connectedness follow from essentially the same argument as in Propositions \ref{lem:G-TM-connected}. It remains to prove that $\R (v_0)$ is reachable from $v_0$.  
	
	Pick any $l \geq s+2$, and $e_i = 2^{i-1}$ for $i = 1,2,\dots, s$; we leave $0 \leq e_0 < 2^{s}$ undefined for the time being. It follows from Lemma \ref{lem:recurrence-A-RS} and subsequent discussion that $G_0$ contains a path of length $l$ from $v_0$ to $v_1 = (\bb a, \bb r, \sigma)$ equal to $\delta^{(l)}(v_0; \bb e)$.
		
	We will now identify the vertex $v_1$. As for $\bb a = (a_\omega)_\omega$, we notice that
	\begin{equation}
		\label{eq:761}
			a_\omega(n) = \pm r(2^l n + (1, \omega) \cdot \bb e ) = \pm r(n) r((1, \omega) \cdot \bb e) = r(n).
	\end{equation}
	Above, we use that fact that $(1, \omega) \cdot \bb e < 2^{l-1}$. Next, $\bb r = (r_\omega)_\omega$ is given by
	$$
		r_\omega = \floor{ \frac { (1, \omega) \cdot \bb e }{2^l} } = 0,
	$$	
	by virtue of the same estimate as before. Finally, $\sigma$ is the product of the $\pm 1$ factors implicit in \eqref{eq:761}, hence
	$$
		\sigma = \prod_{\omega \in \Qs} r((1, \omega) \cdot \bb e) = \prod_{m=0}^{2^s-1} r( m + e_0).
	$$
	Thus, $v_1$ is equal to $\R (v_0)$, provided that $\sigma = \sigma(e_0)$ defined above is equal to $-1$, and $v_1 = v_0$ otherwise. It remains to find $e_0$ for which $\sigma(e_0) = -1$. In fact, it will suffice to show that $\sigma(e_0)$ is not constant with respect to $e_0$. Since $\sigma(e_0 + 1)/\sigma(e_0) = r(2^s + e_0)/r(e_0)$, we have $\sigma(e_0+1) = - \sigma(e_0)$ for any choice $2^{s-1} \leq e_0 < 2^s$, which finishes the argument.
\end{proof}

\begin{corollary}\label{cor:main-RS}
	There exists a constant $c > 0$ such that $\norm{t}_{U^s[2^L]} = O(2^{-cL})$. 
\end{corollary}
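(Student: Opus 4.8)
The plan is to repeat the proof of Corollary~\ref{cor:main-TM} almost verbatim, now invoking Proposition~\ref{lem:G-RS-connected} in place of Proposition~\ref{lem:G-TM-connected} and the recurrence \eqref{eq:recurrence-A-RS-III} in place of \eqref{eq:recurrence-A-TM-II}. First I would observe that $\norm{r}_{U^s[2^L]}^{2^s} = A(L, v_0)$: the initial vertex is $v_0 = ((r)_{\omega \in \Qs}, \bb 0, +1)$, so specialising \eqref{eq:def-A-RS} to $\bb r = \bb 0$ and $a_\omega = r$ for every $\omega$ recovers exactly the defining average \eqref{eq:def-Gowers} on $[2^L]$. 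In particular $A(L,v_0) \geq 0$, being the $2^s$-th power of a Gowers norm.

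Since $G_0$ is finite, strongly connected and aperiodic, the Perron--Frobenius theorem supplies a stationary distribution $\pi \colon V_0 \to [0,1]$ and an exponential mixing estimate $\max_{v,v' \in V_0} \abs{P^{(l)}(v,v') - \pi(v')} = O(2^{-cl})$ for some $c > 0$ depending only on the fixed parameter $s$. Because $\R$ preserves $G_0$ together with all transition probabilities, we get $\pi \circ \R = \pi$, whence
\[ \max_{v,v'} \abs{ P^{(l)}(v,v') - P^{(l)}(v,\R(v')) } = O(2^{-cl}). \]

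Next I would insert this bound into \eqref{eq:recurrence-A-RS-III} taken at $v = v_0$ (the sum there is effectively over $V_0$, since $P^{(l)}(v_0, \cdot)$ is supported on $V_0$) and pair up the vertices: $\R$ is a fixed-point-free involution of $V_0$, as it negates the sign coordinate, so the sum decomposes into contributions of pairs $\{v', \R(v')\}$. From the convention $A(L, (\bb a, \bb r, \sigma)) = \sigma A(L, \bb a, \bb r)$ we have $A(L-l, \R(v')) = -A(L-l, v')$ exactly, so the main term equals $\sum_{\{v',\R(v')\}} \bra{ P^{(l)}(v_0,v') - P^{(l)}(v_0,\R(v')) } A(L-l,v')$. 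Bounding $\abs{A(L-l,v')} \leq 1$ and using the displayed estimate, together with the fact that $\abs{V_0}$ depends only on $s$, gives $A(L,v_0) = O\bra{2^{-cl} + 2^{-(L-l)}}$. Choosing $l = \floor{L/2}$ yields $A(L,v_0) = O(2^{-c'L})$ for a suitable $c' > 0$, and extracting the (non-negative) $2^s$-th root gives $\norm{r}_{U^s[2^L]} = O(2^{-c'L/2^s})$.

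I do not anticipate a genuine obstacle: the argument is structurally identical to that of Corollary~\ref{cor:main-TM}. The only points requiring care are that the sign-tracking built into the vertices of $\cW_{\mathrm{RS}}$ makes $A(L-l,\R(v')) = -A(L-l,v')$ an exact identity, so that the pairing produces no spurious error, and that $\R(V_0) = V_0$ --- but this is precisely part of the content of Proposition~\ref{lem:G-RS-connected}. All implied constants (in the Perron--Frobenius mixing bound and in $\abs{V_0}$) depend only on $s$, which is admissible as $s$ is fixed throughout the section.
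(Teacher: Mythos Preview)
Your proposal is correct and is precisely the ``direct adaptation of the argument in Corollary~\ref{cor:main-TM}'' that the paper's own one-line proof indicates; you have simply written out the details, correctly identifying $\norm{r}_{U^s[2^L]}^{2^s} = A(L,v_0)$, invoking Proposition~\ref{lem:G-RS-connected} and Perron--Frobenius, and using the sign-flip $A(L-l,\R(v')) = -A(L-l,v')$ to pair terms in \eqref{eq:recurrence-A-RS-III}. (Note that the statement as printed has a typo, writing $t$ where $r$ is meant; you have proved the intended bound for~$r$.)
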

\begin{proof}
	Direct adaptation of the argument in Corollary \ref{cor:main-TM}.
\end{proof}

\begin{proof}[Proof of Theorem \ref{thm:RS}]
	We begin by splitting the interval $[N]$ into a disjoint union of intervals $I_j$ of the form $[ m_j 2^{L_j+1},  m_j 2^{L_j+1} + 2^{L_j}),$ and a remainder part $J$ so that $\abs{J} \ll \log N$ and each exponent $L$ appears among $L_j$'s at most $\log N$ times. This can be accomplished by first splitting $[N]$ into dyadic intervals $[m'_j 2^{L'_j}, (m'_j+1) 2^{L'_j})$ (with $L_j'$ all distinct), and then splitting each of these further into intervals $[(m'_j+1) 2^{L'_j} - 2^{k}, (m'_j+1) 2^{L'_j} - 2^{k} + 2^{k-2})$ for $2 \leq k < L'_j$, and the singleton of $(m'_j+1) 2^{L'_j} - 1$ (which we put in $J$).

	For each of the intervals $I_j$ and for any $n + m_j2^{L_j+1} \in I_j$, $n \in [2^j]$, we have $r(n + m_j2^{L_j+1}) = r(n) r(m_j)$, whence $\norm{ 1_{I_j} r }_{U^s[N]} \ll { \frac{2^{L_j}}{N} } \norm{ r }_{U^s[2^{L_j}]}$. Using Corollary \ref{cor:main-RS} and the bound $\norm{1_J}_{U^s[N]} \ll \bra{ \abs{J}/N }^{1/2^s}$, we may now estimate:
	\begin{align*}
	\norm{ t }_{U^s[N]} &\ll \sum_{j} \frac{2^{L_j(1-c)}}{N} + N^{-\ffrac{1}{2^s} + o(1)}
	\\ & \ll \frac{\log N}{N} \sum_{L \leq \log N} 2^{L(1-c)} + N^{-\ffrac{1}{2^s} + o(1)} \ll N^{-c'},
	\end{align*}
	where $0 < c' < \min(c,1/2^s)$ is arbitrary.
\end{proof} 
\section{Closing remarks}\label{sec:End}

Our argument in Section \ref{sec:RS} dealing with the Rudin-Shapiro sequence can be generalised to other automatic sequences. We hope to address this in an upcoming paper with Jakub Byszewski and Clemens M\"{u}llner. Here, we discuss  some simple generalisations which can be obtained by slight adaptations of the existing argument, as well as the key obstacles which need to be overcome for further progress to be made.

A crucial feature of the Rudin-Shapiro sequence which we exploited was the symmetry of the kernel, so for the time being let us restrict our attention to $2$-automatic sequences $a(n)$ with $\cN_2(a) = - \cN_2(a)$. Natural examples of such sequences are the given by the ``pattern counting'' sequences of the form $a(n) = (-1)^{f_{\pi}(n)}$, where $\pi$ is a word over the alphabet $\{\mathtt{1},\mathtt{0}\}$ and $f_\pi(n)$ denotes the number of times $\pi$ appears in the binary expansion of $n$. (Hence, $\pi = \mathtt{1}$ for Thue-Morse and $\pi = \mathtt{11}$ for Rudin-Shapiro. To avoid technical complications, assume that $\pi$ begins and ends with $\mathtt{1}$.)

The recursive relation analogous to \eqref{eq:recurrence-A-RS} from Lemma \ref{lem:recurrence-A-RS} holds in full generality, and similarly the corresponding random walk can be constructed without any significant modifications. It remains true that the underlying graph is symmetric, and that the analogue of \eqref{eq:recurrence-A-RS-III} holds. Provided that the graph has the properties mentioned in Proposition \ref{lem:G-RS-connected}, the analogue of Corollary \ref{cor:main-RS} stating that $\norm{a}_{U^s[2^L]} = O(2^{-cL})$ follows immediately. To obtain the bound $\norm{a}_{U^s[N]} = O(N^{-c})$ for general $N$, one can apply a decomposition of $[N]$ analogous to that in the Proof of Theorem B. For pattern counting sequences introduced above, this construction is repeated almost verbatim, except one uses intervals of the form $[2^L(2^{\abs{\pi}} m),2^L(2^{\abs{\pi}} m+1))$, where $\abs{\pi}$ denotes the length of the pattern.

The key difficulty lies in proving the analogue of Proposition \ref{lem:G-RS-connected}, asserting that the graph supported on the vertices reachable from the origin in the random walk is finite, aperiodic, strongly connected and preserved under the natural symmetry. Finiteness is clear in full generality, by the same reasoning as in Proposition \ref{lem:G-TM-connected}. We expect that aperiodicity should be easy to check; for pattern counting sequences we simply have a loop labelled $\bb 0$ at the origin. Strong connectedness does not appear to be crucial, since we may consider the strongly connected components independently; for pattern counting sequences continuing from any vertex along the edges labelled $\bb 0$ leads to either the origin or its symmetric image.

Proving the symmetry is presumably the most difficult part. However, in the case of pattern counting sequences, it can be carried out by a slight modification of the argument in Proposition \ref{lem:G-RS-connected}. The only difference is that (with the notation taken from the proof of Proposition \ref{lem:G-RS-connected}) we need to take $l \geq s + \abs{\pi}$ now, and notice that $\sigma(e_0+1)/\sigma(e_0) = a(2^s + e_0)/a(e_0) = -1$ for suitable choice of $e_0$. Hence, we expect that for a pattern counting sequence such $a(n) = (-1)^{f_{\pi}(n)}$ we have $\norm{ a }_{U^s[N]} = O(N^{-c})$ for a constant $c = c(a,s) > 0$.

Conversely, one may ask about the minimal conditions under which it can be shown that $\norm{ a }_{U^s[N]} = o(1)$. It is well-known that a sequence with small uniformity norms cannot correlate with a polynomial phase, or indeed with an $s$-step nilsequence. While it would be surprising to find an automatic sequence correlating with (say) a quadratic phase $e^{2 \pi i \alpha n^2}$ (with $\alpha \in \RR \setminus \QQ$), it is certainly possible to have automatic sequences which correlate with periodic sequences. (In fact, in the situation above it is not hard to show that $\EE_{n < N} a(n) e^{2 \pi i \alpha n^2} = o(1)$ as $N \to \infty$.)

Motivated by our main theorems and the above discussion, we are led to suspect the following. The suspicion is especially strong in the case of sequences with symmetric kernel.

\begin{conjecture*}
	Let $a(n)$ be a $2$-automatic sequence 	such that $\EE_{n < N} a(qn + r) \to 0$ as $N \to \infty$ for any $q \in \NN,\ r \in \NN_0$. Then, $\norm{a}_{U^s[N]} \to 0$ as $N \to \infty$ for any $s \in \NN$.
\end{conjecture*}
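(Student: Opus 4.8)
The plan is to follow the template established for the Thue--Morse and Rudin--Shapiro sequences in Sections~\ref{sec:TM} and~\ref{sec:RS}, reducing the statement to a combinatorial property of an explicit random walk. First I would set up the generalised averages: using that $\cN_2(a) = -\cN_2(a)$, pick a symmetric choice $\cN_2^+(a)$ of representatives and define
$$
	A(L, \bb a, \bb r) := \EE_{n, \bb h }\prod_{\omega \in \Qs} a_\omega ( n + \omega \cdot \bb h + r_\omega),
$$
exactly as in \eqref{eq:def-A-RS}, where now $a_\omega$ ranges over $\cN_2^+(a)$. The recurrence of Lemma~\ref{lem:recurrence-A-RS} goes through verbatim (the only property used was that each $a_\omega$ restricted to a residue class base $2^l$ lies in $\cN_2(a) = \pm\cN_2^+(a)$), yielding a random walk $\cW_a$ on a graph $G = (V,E)$ with vertices $(\bb a, \bb r, \pm1)$, transition probabilities $P(v,v') = \PP_{\bb e}(\delta(v;\bb e) = v')$, and the natural symmetry $\R\colon (\bb a, \bb r, \sigma) \mapsto (\bb a, \bb r, -\sigma)$. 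Writing $v_0 = ((a)_{\omega \in \Qs}, \bb 0, +1)$ and letting $G_0$ be the subgraph on the vertices $V_0$ reachable from $v_0$, the analogue of \eqref{eq:recurrence-A-RS-III} holds.

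The heart of the matter is then the analogue of Proposition~\ref{lem:G-RS-connected}: that $G_0$ is finite, aperiodic, and that $\R(v_0) \in V_0$ (strong connectedness can be relaxed to working inside a single strongly connected component, as noted in Section~\ref{sec:End}). Finiteness follows from the same bound $0 \le r_\omega \le \abs{\omega}$ as before, since $\cN_2^+(a)$ is finite and the shift amounts $(1,\omega)\cdot\bb e$ are controlled. Aperiodicity is where the hypothesis enters: I expect one can still find a short cycle through $v_0$, but this need not be a loop labelled $\bb 0$ in general, so this step requires an argument. For reaching $\R(v_0)$, I would run the computation in the proof of Proposition~\ref{lem:G-RS-connected}: choose $l$ large, $e_i = 2^{i-1}$ for $1 \le i \le s$, and leave $e_0$ free; then $\delta^{(l)}(v_0;\bb e) = (\bb a, \bb r, \sigma(e_0))$ with $\bb r = \bb 0$, each $a_\omega = a$ (using $a(2^l n + m) = a(n) \cdot a(m)$ for $m < 2^l$, valid once $l$ exceeds the automaton's memory), and
$$
	\sigma(e_0) = \prod_{\omega \in \Qs} a((1,\omega)\cdot\bb e) = \prod_{m=0}^{2^s-1} a(m + e_0).
$$
So it suffices to show $\sigma$ is non-constant in $e_0$, i.e. $\sigma(e_0+1)/\sigma(e_0) = a(2^s + e_0)/a(e_0) = -1$ for some $e_0$.

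\emph{This last point is the main obstacle}, and it is where the full strength of the hypothesis $\EE_{n<N} a(qn+r) \to 0$ must be used: the conclusion is \emph{false} without it (e.g. a periodic sequence like $a(n) = (-1)^n$, which is $2$-automatic with symmetric kernel but has $\sigma(e_0) \equiv \pm1$). The link I would try to exploit is that $a(2^s + e_0) = a(e_0)$ for \emph{all} $0 \le e_0 < 2^s$ would force a strong self-similarity on $a$ across dyadic blocks; iterating, one would get that $a$ is eventually determined by low-order digits in a way incompatible with $\EE_{n<N} a(qn+r) \to 0$ along a suitable progression $q = 2^s$. Making this precise --- quantifying how the digit hypothesis rules out the degenerate case, and combining it with the relaxation to strongly connected components and with the dyadic decomposition of $[N]$ (using intervals adapted to the automaton's period, as in the proof of Theorem~\ref{thm:RS}) --- is what I expect to require genuinely new input beyond the present arguments, and is presumably why the statement is posed as a conjecture rather than a theorem.
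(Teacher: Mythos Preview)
The statement is a \emph{conjecture}; the paper offers no proof, only the heuristic discussion in Section~\ref{sec:End}. Your outline tracks that discussion closely, and you correctly identify that reaching $\R(v_0)$ from $v_0$ is the crux and that the averaging hypothesis must enter there.

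That said, your sketch silently imports two assumptions not granted by the conjecture. First, you assume $\cN_2(a) = -\cN_2(a)$ from the outset; the conjecture is stated for \emph{arbitrary} $2$-automatic sequences with vanishing averages along progressions, and the paper flags the symmetric-kernel case only as the one where ``the suspicion is especially strong.'' Without kernel symmetry the whole random-walk framework with the sign involution $\R$ is unavailable as written. Second, the identity $a(2^l n + m) = a(n)\,a(m)$ you invoke is false already for Rudin--Shapiro unless $m < 2^{l-1}$ (a zero digit is needed to separate the blocks; see the line after \eqref{eq:761}), and for a general $2$-automatic sequence---even one with symmetric kernel---no such multiplicativity holds: the kernel elements need not be scalar multiples of $a$. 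Your computation of $\delta^{(l)}(v_0;\bb e)$ therefore does not in general land at $((a)_\omega, \bb 0, \sigma(e_0))$, and the reduction to non-constancy of $\sigma(e_0)$ breaks down outside the pattern-counting class already handled in Section~\ref{sec:End}.

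In short, your proposal is an honest recap of the pattern-counting argument the paper already sketches, together with a correct acknowledgement of the main obstacle; but it does not advance the general conjecture, and there is no proof in the paper to compare it against.
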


\bibliographystyle{alpha}
\bibliography{bibliography}

\begin{thebibliography}{DDM12}

\bibitem[AS03]{AS}
Jean-Paul Allouche and Jeffrey Shallit.
\newblock {\em Automatic sequences}.
\newblock Cambridge University Press, Cambridge, 2003.
\newblock Theory, applications, generalizations.

\bibitem[DDM12]{DeshouillersDrmotaMorgenbesser-2012}
Jean-Marc Deshouillers, Michael Drmota, and Johannes~F. Morgenbesser.
\newblock Subsequences of automatic sequences indexed by {$\lfloor n^c\rfloor$}
  and correlations.
\newblock {\em J. Number Theory}, 132(9):1837--1866, 2012.

\bibitem[DMR13]{DrmotaMauduitRivat-TM-squares}
Michael Drmota, Christian Mauduit, and Jo\"el Rivat.
\newblock The {T}hue-{M}orse sequence along squares is normal, 2013.
\newblock Preprint.

\bibitem[Drm14]{Drmota-2014}
Michael Drmota.
\newblock Subsequences of automatic sequences and uniform distribution.
\newblock In {\em Uniform distribution and quasi-{M}onte {C}arlo methods},
  volume~15 of {\em Radon Ser. Comput. Appl. Math.}, pages 87--104. De Gruyter,
  Berlin, 2014.

\bibitem[Fan]{Fan-A}
Aihua Fan.
\newblock Oscillating sequences of higher order and topological systems of
  quasi-discrete spectrum.
\newblock Preprint.

\bibitem[Gel68]{Gelfond-1967}
A.~O. Gel'fond.
\newblock Sur les nombres qui ont des propri\'et\'es additives et
  multiplicatives donn\'ees.
\newblock {\em Acta Arith.}, 13:259--265, 1967/1968.

\bibitem[Gow01]{Gowers-2001}
W.~T. Gowers.
\newblock A new proof of {S}zemer\'edi's theorem.
\newblock {\em Geom. Funct. Anal.}, 11(3):465--588, 2001.

\bibitem[Gre]{Green-book}
Ben Green.
\newblock {\em {Higher-Order Fourier Analysis, I}}.
\newblock (Notes available from the author).

\bibitem[HK09]{HostKra-2009}
Bernard Host and Bryna Kra.
\newblock Uniformity seminorms on {$\ell^\infty$} and applications.
\newblock {\em J. Anal. Math.}, 108:219--276, 2009.

\bibitem[MR09]{MauduitRivat-2009}
Christian Mauduit and Jo\"el Rivat.
\newblock La somme des chiffres des carr\'es.
\newblock {\em Acta Math.}, 203(1):107--148, 2009.

\bibitem[MR10]{MauduitRivat-2010}
Christian Mauduit and Jo\"el Rivat.
\newblock Sur un probl\`eme de {G}elfond: la somme des chiffres des nombres
  premiers.
\newblock {\em Ann. of Math. (2)}, 171(3):1591--1646, 2010.

\bibitem[MR15]{MauduitRivat-2015}
Christian Mauduit and Jo\"el Rivat.
\newblock Prime numbers along {R}udin-{S}hapiro sequences.
\newblock {\em J. Eur. Math. Soc. (JEMS)}, 17(10):2595--2642, 2015.

\bibitem[MS98]{Mauduit-1998}
Christian Mauduit and Andr{\'a}s S{\'a}rk{\"o}zy.
\newblock On finite pseudorandom binary sequences. {II}. {T}he {C}hampernowne,
  {R}udin-{S}hapiro, and {T}hue-{M}orse sequences, a further construction.
\newblock {\em J. Number Theory}, 73(2):256--276, 1998.

\bibitem[MS15]{MullnerSpiegelhofer}
Clemens M\"{u}llner and Lukas Spiegelhofer.
\newblock Normality of the {T}hue-{M}orse sequence along {P}iatetski-{S}hapiro
  sequences, {II}, 2015.
\newblock Preprint. {\href{ https://arxiv.org/abs/1511.01671 }{arXiv:1511.01671
  [math.NT]}}.

\bibitem[Tao12]{Tao-book}
Terence Tao.
\newblock {\em Higher order {F}ourier analysis}, volume 142 of {\em Graduate
  Studies in Mathematics}.
\newblock American Mathematical Society, Providence, RI, 2012.

\end{thebibliography}

\end{document}